\theoremstyle{plain}
\newtheorem{thm}{Theorem}[section]
\newtheorem{lem}[thm]{Lemma}
\newtheorem{cor}[thm]{Corollary}
\theoremstyle{definition}
\newtheorem{defn}[thm]{Definition}
\newtheorem{nota}[thm]{Notation}
\newcommand{\qbinom}[2]{\genfrac{[}{]}{0pt}{}{#1}{#2}}
\newcommand{\id}{\mathrm{id}}
\newcommand{\betaP}[3] {%
    \draw[->] (#1+0.2,#2+0.0)  arc(0:400:0.2) node[right]{$#3$};
}
\newcommand{\circles}[4] {%
    \draw[
        decoration={markings, mark=at position 0.25 with {\arrow{#4}}},
        postaction={decorate}
        ]
 (#1,#2) circle (0.2cm);
\draw (#1+.05,#2+0.3) node[anchor=west]  {$#3$};
\draw (#1-.05,#2-0.3) node  {\phantom{$#3$}};
}
\newcommand{\arc}[5] {%
   \draw[rounded corners=.25cm,
        decoration={markings, mark=at position 0.5 with {\arrow{#5}}},
        postaction={decorate}]
   (#3-0.3,#1) --(#3-0.3,#1-#2) --(#3+0.3,#1-#2)--(#3+0.3,#1) ;
\draw (#3+.4,#1-#2) node  {$#4$};
}
\title{The pop-switch planar algebra and the Jones-Wenzl idempotents}
\author{Ellie~Grano}
\address{Ellie Grano, Pepperdine University-NASC, 
24255 Pacific Coast Hwy, 
Malibu, CA 90263-4321}
\email{ellie.grano@pepperdine.edu}
\author{Stephen~Bigelow}
\address{Stephen Bigelow, Dept of Math, South Hall Room 6607, University of California, Santa Barbara, CA93106}
\email{bigelow@math.ucsb.edu}
\date{}
\begin{document}

\tikzset{mystyle/.style={baseline=(current bounding box), scale=0.8, every node/.style={transform shape}}}

\begin{abstract}

The Jones-Wenzl idempotents are elements of the Temperley-Lieb planar algebra
that are important, but complicated to write down.  
We will present a new planar algebra, the pop-switch planar algebra, which contains the Temperley-Lieb planar algebra.
It is motivated by Jones' idea of the graph planar algebra of type $A_n$.
In the tensor category of idempotents of the pop-switch planar algebra,
the $n$th Jones-Wenzl idempotent is isomorphic to a direct sum of $n+1$ diagrams consisting of only vertical strands.
\end{abstract}

\maketitle

\section{Introduction}
\label{sec:intro}

The Temperley-Lieb algebras were first introduced by Temperley and Lieb \cite{TL}
in their work on transfer matrices in statistical mechanics.  
Vaughn F. R. Jones independently rediscovered Temperley-Lieb algebras
in his work on von Neumann algebras \cite{Jones}.
He assembled these algebras together to form the Temperley-Lieb planar algebra,
the simplest example of a subfactor planar algebra. 

The Jones-Wenzl idempotents, first introduced in \cite{Wenzl}, are elements of the Temperley-Lieb algebras.
One way they arise naturally is in representation theory.
The Temperley-Lieb algebras encode
the category of representations of $U_q(\mathfrak{sl}_2)$,
and the Jones-Wenzl idempotents represent the irreducible representations.
Chapters in books have been devoted to them \cite{Kauffman}.
They have been categorified by \cite{JWcateg} and \cite{JWcateg2},
and generalized \cite{GenJW}.

While important, the Jones-Wenzl idempotents are difficult to write down explicitly.
The $n$th Jones-Wenzl idempotent  is a linear combination of
every diagram with $n$ non-intersecting strands.
The number of these diagrams is the $n$th Catalan number.
To find the coefficient of a given diagram requires a complicated algorithm
originally given by Frankel and Khovanov \cite{frenkel1997}
and later written down by Morrison \cite{Morr}.

In this paper,
we define the pop-switch planar algebra,
a new planar algebra that contains the Temperley-Lieb planar algebra.
Our original motivation was
a diagrammatic treatment of the graph planar algebra
introduced by Jones \cite{JonesGPA}. 
The pop-switch planar algebra captures  with simple diagrams the complicated calculations involved in working with objects in the graph planar algebra.

The main theorem of this paper shows that each Jones-Wenzl idempotent is isomorphic to a direct sum of diagrams with only vertical strands.  
It is to be hoped that
this makes them easier to work with,
and gives a new approach to some open problems.

\section{Background}

For convenience, we work over the field $\mathbb{C}$
and let $q$ be a nonzero complex number that is not a root of unity.
Many of the results hold over other fields,
but if $q$ is a root of unity
the proofs fail due to division by zero.

\begin{defn}
The $n$th {\em quantum number} is defined as 
$$[n] = [n]_q = \frac{q^n-q^{-n}}{q-q^{-1}}$$
and the {\em quantum binomial} is defined as
$$\qbinom{n}{k}=\frac{[n][n-1]\cdots[n-k+1]}{[k][k-1]\cdots[1]}$$
where $0 \le k \le n$ are natural numbers.
\end{defn}

We have the following identities.

\begin{lem}
\label{lem:q}
$[k+l]=[k][l+1] - [k-1][l]$.
\end{lem}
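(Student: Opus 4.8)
The plan is to reduce the identity to an elementary computation with Laurent polynomials in $q$. Write $d = q - q^{-1}$, which is nonzero since $q$ is not a root of unity (in particular $q \neq \pm 1$), so that the definition reads $[n] = (q^n - q^{-n})/d$. Both sides of the claimed identity are then rational expressions in $q$ whose denominator is a power of $d$, and after clearing denominators the claim becomes an identity between Laurent polynomials, which I can verify directly.

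Concretely, I would multiply the right-hand side by $d^2$ and expand the two products, each of which contributes four monomials:
\begin{align*}
(q^k - q^{-k})(q^{l+1} - q^{-l-1}) &= q^{k+l+1} - q^{k-l-1} - q^{-k+l+1} + q^{-k-l-1}, \\
(q^{k-1} - q^{-k+1})(q^l - q^{-l}) &= q^{k+l-1} - q^{k-l-1} - q^{-k+l+1} + q^{-k-l+1}.
\end{align*}
Subtracting the second line from the first, the two monomials $q^{k-l-1}$ and $q^{-k+l+1}$ cancel, leaving $q^{k+l+1} - q^{k+l-1} - q^{-k-l+1} + q^{-k-l-1}$. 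Grouping the positive and negative powers of $q^{k+l}$ separately, this factors as $(q^{k+l} - q^{-k-l})(q - q^{-1}) = d\,(q^{k+l} - q^{-k-l})$. Dividing back by $d^2$ yields $(q^{k+l} - q^{-k-l})/d = [k+l]$, as desired.

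I expect no serious obstacle: the only thing to watch is the sign and exponent bookkeeping among the eight monomials, and the legitimacy of dividing by $d$, which is guaranteed by the hypothesis that $q$ is not a root of unity. Alternatively, one could argue by induction, since the definition immediately gives the base recurrence $[m+1] = [2][m] - [m-1]$, and fixing $k$ while inducting on $l$ reduces the general addition formula to this recurrence. I would prefer the direct expansion above, as it is self-contained and sidesteps the bookkeeping of setting up the induction hypothesis.
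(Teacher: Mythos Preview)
Your proof is correct and is precisely the ``simple computation from the definition'' that the paper alludes to without writing out. The expansion and cancellation you perform are exactly the intended route, so there is nothing to add.
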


\begin{proof}
This follows from the definition and a simple computation.
\end{proof}

\begin{cor}
\label{cor:q}
$\qbinom{k+l}{l}=[l+1]\qbinom{k+l-1}{l}-[k-1]\qbinom{k+l-1}{l-1}$.
\end{cor}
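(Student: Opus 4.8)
The plan is to derive the corollary directly from Lemma~\ref{lem:q} by unwinding the definition of the quantum binomial. Writing $n=k+l$, the quantity $\qbinom{k+l}{l}$ is the product $[k+l][k+l-1]\cdots[k+1]$ in the numerator over $[l][l-1]\cdots[1]$ in the denominator. The natural first move is to isolate the leading factor $[k+l]$ of the numerator and apply Lemma~\ref{lem:q} to it. This is where the lemma enters: we substitute $[k+l]=[k][l+1]-[k-1][l]$, which splits the single binomial into a difference of two terms, each still carrying the surviving product $[k+l-1]\cdots[k+1]$ over $[l]!$ (using $[l]!$ informally for $[l][l-1]\cdots[1]$).

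Next I would reorganize each of the two resulting terms into a quantum binomial of the shape appearing on the right-hand side. For the first term, the factor $[k][k+l-1]\cdots[k+1]=[k+l-1]\cdots[k]$ is exactly the numerator of $\qbinom{k+l-1}{l}$ (which runs $[k+l-1]\cdots[k]$), so after dividing by $[l]!$ and multiplying by $[l+1]$ we recover $[l+1]\qbinom{k+l-1}{l}$. For the second term, the factor $[l]$ in the numerator cancels the leading $[l]$ of the denominator $[l]!$, leaving $[l-1]!$ below; combined with $[k-1]$ and the surviving product $[k+l-1]\cdots[k+1]$, the numerator becomes $[k+l-1]\cdots[k-1]$ over $[l-1]!$, which is precisely $[k-1]\qbinom{k+l-1}{l-1}$ once the $[k-1]$ is factored out. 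Assembling the two pieces with the minus sign from the lemma yields the claimed identity.

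The calculation is essentially bookkeeping, so the only genuine care is with index ranges and edge cases. The main obstacle I anticipate is verifying that the numerator products line up correctly after the substitution — in particular confirming that $[k+l-1]\cdots[k+1]$ times $[k]$ really fills out the numerator of $\qbinom{k+l-1}{l}$ with no off-by-one error, and that the cancellation $[l]/[l]!=1/[l-1]!$ matches the denominator of $\qbinom{k+l-1}{l-1}$. One should also check the boundary values (for instance small $l$, where $\qbinom{k+l-1}{l-1}$ degenerates) so that the identity holds for all admissible $k,l$ rather than merely generic ones; since $q$ is not a root of unity no quantum integer vanishes, so no division-by-zero issue arises in the cancellation step.
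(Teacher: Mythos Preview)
Your approach is correct and is essentially the paper's own argument: both reduce the identity to Lemma~\ref{lem:q} by expanding the quantum binomials and cancelling common factors (the paper just phrases it tersely as ``take a common denominator, cancel, and reduce to the lemma''). One small slip in your write-up: in the second term the surviving numerator is $[k+l-1]\cdots[k+1]$ times the separate factor $[k-1]$, not the consecutive product ``$[k+l-1]\cdots[k-1]$'' (which would include $[k]$); your final identification with $[k-1]\qbinom{k+l-1}{l-1}$ is nonetheless correct.
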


\begin{proof}
After taking a common denominator and cancelling common terms,
this reduces to the previous lemma.
\end{proof}

\subsection{Planar algebras}

We won't define planar algebras in great detail.
See Jones' original paper \cite{Jones} for a formal definition.
See \cite{D2n} for a helpful introduction.

We will use what are sometimes called {\em vanilla} planar algebras.
These lack any of the optional extra features or properties
that are often included in the definition.

A planar tangle $T$ consists of:
\begin{itemize}
\item a disk $D$ called the {\em output disk},
\item a finite set of disjoint disks $D_i$ called the {\em input disks}
      in the interior of $D$,
\item a point called a {\em basepoint}
      of $\partial D$ and of each $\partial D_i$, and
\item a collection of disjoint curves called {\em strands} in $D$.
\end{itemize}
The strands can be closed curves,
or can have endpoints on $\partial D$
or $\partial D_i$ or both.
Apart from the endpoints,
the strands lie in the interior of $D$
and do not intersect $D_i$.
The basepoints do not coincide with endpoints of strands.
Planar tangles are considered up to isotopy in the plane.

It is sometimes possible to
insert a planar tangle $T_1$
into one of the input disks of another planar tangle $T_2$
to obtain a new planar tangle.
Specifically,
this is possible if
the number of endpoints on the output disk of $T_1$
is the same as the number of endpoints
on the chosen input disk of $T_2$.
Then we can use an isotopy to make the endpoints match up.
This still leaves an ambiguity of how to rotate $T_1$.
The basepoints remove this ambiguity:
we require the basepoint of the output disk of $T_1$
to coincide with the basepoint of the chosen input disk of $T_2$.

The planar tangles,
together with this operation of inserting one planar tangle
into an input disk of another,
form a rather general type of algebraic gadget called an
{\em operad}.
Briefly,
a planar algebra
is a representation of the operad of planar tangles.

More concretely,
a planar algebra $\mathcal{P}$ is a sequence of vector spaces 
$\mathcal{P}_i$ for $i \ge 0$.
Suppose $T$ is a planar tangle
with input disks $D_1, \dots, D_n$.
Let $d_i$ be the number of endpoints on $\partial D_i$
and let $d$ be the number of endpoints on $\partial D$.
Suppose $v_i \in \mathcal{P}_{d_i}$ for all $i$.
Then there is an action of $T$
$$T(v_1, \dots, v_n) \in \mathcal{P}_d.$$
The action of planar tangles
must be multilinear,
and it must be compatible with the operad structure
in a natural sense.

The definition of a planar algebra may seem complicated.
However it formalizes a fairly simple idea,
familiar to knot theorists,
of tangle-like diagrams
that can be glued together in arbitrary planar ways.
Perhaps the main novelty
is that we allow
formal linear combinations of diagrams,
which glue together in a multilinear way.

An example might help.

\subsection{The Temperley-Lieb planar algebra}

The simplest planar algebra is
the Temperley-Lieb planar algebra $\mathcal{TL}$.
The vector space $\mathcal{TL}_i$
is spanned by tangle diagrams that have no input disks
and $i$ endpoints on the output disk.

There is one relation.
A closed loop in a diagram
may be deleted
at the expense of multiplying by the scalar $q + q^{-1}$.
We call this the {\em bubble-bursting relation}.

If $i$ is odd then $\mathcal{T}_i$ is zero.
A basis for $\mathcal{T}_{2n}$
is given by tangle diagrams
that have $n$ strands and no closed loops.

In practice,
most planar algebras
can be thought of as formal linear combinations of diagrams
that are similar to Temperley-Lieb diagrams,
but with optional extra features,
like crossings, orientations, colors, or vertices.

\subsection{The category corresponding to a planar algebra}

Suppose $\mathcal{P}$ is a planar algebra.
We now describe how $\mathcal{P}$ can be thought of as a category.
In this context,
the input and output disks in the definition of $\mathcal{P}$
should be thought of as rectangles instead of round disks.

The category $\mathbf{C}$ corresponding to $\mathcal{P}$ is as follows.
\begin{itemize}
\item The objects are the non-negative integers.
\item The morphisms from $i$ to $j$
      are the elements of $\mathcal{P}_{i+j}$,
      thought of as having $i$ endpoints on the bottom of the rectangle
      and $j$ on the top.
\item The composition $f \circ g$
      is given by stacking $f$ on top of $g$.
\end{itemize}

Let $\mathcal{P}^j_i$ denote $\mathcal{P}_{i+j}$
with the elements treated as morphisms from $i$ to $j$.

An idempotent
is an element $p$ of $\mathcal{P}^n_n$
such that $p^2 = p$.

We can expand the objects in the category
by a construction known as the {\em Karoubi envelope}.
This new category $\mathbf{C}'$ is defined as follows.
\begin{itemize}
\item The objects of $\mathbf{C}'$ are
      the idempotents of $\mathbf{C}$.
\item The morphisms from $p$ to $q$
      are morphisms in $\mathbf{C}$ of the form $qxp$.
\end{itemize}

Next, note that
$\mathbf{C}$ and $\mathbf{C}'$
are also tensor categories,
where $x \otimes y$
is obtained by placing $x$ to the left of $y$.

Finally,
we can define a {\bf matrix category} of $\mathbf{C}'$.
The objects are formal direct sums of objects of $\mathbf{C}'$
and the morphisms are formal matrices.
Instead of this abstract definition,
all we need is the following lemma.

\begin{lem} \label{lem:dirSum}
Suppose $p$ and $q_1,\dots,q_n$ are idempotents such that
$$p = q_1 + \dots + q_n,$$
and $q_i q_j = 0$ whenever $i \neq j$.
Then 
$$p \simeq q_1 \oplus \dots \oplus q_n.$$
\end{lem}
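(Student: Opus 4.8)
The plan is to directly exhibit a pair of mutually inverse morphisms realizing the isomorphism in the matrix category, built out of the $q_i$ themselves. First I would record the sanity check that $p$ really is an idempotent and hence a legitimate object of the Karoubi envelope: using orthogonality, $p^2 = (\sum_i q_i)(\sum_j q_j) = \sum_{i,j} q_i q_j = \sum_i q_i^2 = \sum_i q_i = p$. This also gives the two facts I will use repeatedly, namely $q_i p = \sum_j q_i q_j = q_i$ and symmetrically $p q_i = q_i$.

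Next I would define the candidate maps. Let $f \colon p \to q_1 \oplus \dots \oplus q_n$ be the column whose $i$th entry is $q_i$, regarded as a morphism $p \to q_i$, and let $g \colon q_1 \oplus \dots \oplus q_n \to p$ be the row whose $i$th entry is $q_i$, regarded as a morphism $q_i \to p$. I would first check these are legitimate morphisms in $\mathbf{C}'$: a morphism $p \to q_i$ must have the form $q_i x p$, and indeed $q_i = q_i q_i p$ since $q_i p = q_i$; symmetrically $q_i = p q_i q_i$ exhibits the entries of $g$ in the required form $p x q_i$.

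Then I would compute the two composites. The composite $g \circ f \colon p \to p$ is the single entry $\sum_i q_i \cdot q_i = \sum_i q_i = p$, which is exactly the identity morphism of the object $p$. The composite $f \circ g \colon q_1 \oplus \dots \oplus q_n \to q_1 \oplus \dots \oplus q_n$ is the $n \times n$ matrix with $(i,j)$ entry $q_i q_j$; by orthogonality this is the diagonal matrix with entries $q_i$, which is precisely the identity morphism of $q_1 \oplus \dots \oplus q_n$. Hence $f$ and $g$ are mutually inverse, giving $p \simeq q_1 \oplus \dots \oplus q_n$.

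The only real subtlety—and the step I would be most careful about—is keeping the conventions of the Karoubi envelope and the matrix category straight. In particular, I must remember that the identity morphism on an idempotent object $r$ is $r$ itself (since morphisms $r \to r$ have the form $r x r$, for which $r$ is a two-sided identity under composition), and that the identity on a direct sum is the diagonal matrix of these identities. Once those conventions are pinned down, everything reduces to the short orthogonality computation above, with no further obstacle.
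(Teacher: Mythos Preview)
Your argument is correct and is the standard way to establish this isomorphism in the matrix category of a Karoubi envelope: exhibit the column $f=(q_i)_i$ and row $g=(q_i)_i$, check they are well-formed morphisms via $q_i = q_i q_i p = p q_i q_i$, and compute $g\circ f = \sum_i q_i = p$ and $f\circ g = (q_i q_j)_{i,j} = \operatorname{diag}(q_1,\dots,q_n)$. Your cautionary remark about the identity morphism on an idempotent object $r$ being $r$ itself is exactly the right point to flag.

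As for comparison with the paper: the paper does not actually prove this lemma. It is stated without proof as a standard fact about the matrix category (``Instead of this abstract definition, all we need is the following lemma''), so you have supplied what the paper leaves implicit. Your proof is precisely the one a reader would be expected to reconstruct.
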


\subsection{Jones-Wenzl idempotents}

The Jones-Wenzl idempotent $p_n$
is the unique element of $\mathcal{TL}^n_n$
such that
\begin{itemize}
\item $p_n \neq 0$
\item $p_n^2 = p_n$
\item $a p_n = 0$ if $a$ is any diagram
      that includes a strand with both endpoints at the bottom of the rectangle.
\item $p_n b = 0$ if $b$ is any diagram
      that includes a strand with both endpoints at the top of the rectangle.
\end{itemize}


Because of these last two properties, the Jones-Wenzl idempotents are sometimes referred to as ``uncappable."
If $q$ is a root of unity,
the Jones-Wenzl idempotents do not exist for all $n$.

\section{The pop-switch planar algebra}
\label{sec:pspa}

\subsection{The pop-switch planar algebra}

\begin{defn}\label{def:pspa}
Let the pop-switch planar algebra $\mathcal{PSPA}$ be the planar algebra generated by oriented strands modulo the following relations.
\begin{itemize}
\item The pop-switch relations
$$\begin{tikzpicture}[mystyle] 



\circles{-.65}{0}{}{<}

\draw[
        decoration={markings, mark=at position 1.0 with {\arrow{>}}},
        postaction={decorate}
        ]
        (-0.1,-0.9) --(-0.1,0.9); 
\draw[
        decoration={markings, mark=at position 1.0 with {\arrow{>}}},
        postaction={decorate}
        ]
        (0.4,0.9) --(0.4,-0.9); 

\end{tikzpicture} = \begin{tikzpicture}[mystyle] 

%

\arc{.9}{.4}{0}{}{>}
\arc{-.9}{-.4}{0}{}{<}

\end{tikzpicture}, \quad
\begin{tikzpicture}[mystyle] 



\circles{-.65}{0}{}{>}

\draw[
        decoration={markings, mark=at position 1.0 with {\arrow{>}}},
        postaction={decorate}
        ]
        (-0.1,-0.9) --(-0.1,0.9); 
\draw[
        decoration={markings, mark=at position 1.0 with {\arrow{>}}},
        postaction={decorate}
        ]
        (0.4,0.9) --(0.4,-0.9); 

\end{tikzpicture} = \begin{tikzpicture}[mystyle] 


%

\arc{.9}{.4}{0}{}{<}
\arc{-.9}{-.4}{0}{}{>}

\end{tikzpicture}$$
\item The bubble-bursting relation
$$\raisebox{-4pt}{\tikz \draw[->] (0,0) +(90:0.2) arc(90:460:0.2);} +
  \raisebox{-4pt}{\tikz \draw[->] (0,0) +(90:0.2) arc(460:90:0.2);} =
  (q + q^{-1}) \epsilon,$$
where $\epsilon$ denotes the empty diagram.
\end{itemize}
\end{defn}

This contains the Temperley-Lieb planar algebra; a non-oriented strand is the sum of each orientation.

We need some tools to move the diagrams around. 

Denote $n$ parallel strands oriented in the same direction
by a single oriented strand labelled $n$.
$$\begin{tikzpicture}[mystyle]

\draw[ decoration={markings, mark=at position 0.55 with {\arrow{>}}},
        postaction={decorate}]
   (0.0,-1.0)--(0.0,1.0) ;
\draw (.3,0.1) node  {$n$};

\end{tikzpicture}
 = \begin{tikzpicture}[baseline,scale=0.8, every node/.style={transform shape}]

\draw[ decoration={markings, mark=at position 0.7 with {\arrow{>}}},
        postaction={decorate}]
   (0.0,-1.0)--(0.0,1.0) ;
\draw[ decoration={markings, mark=at position 0.7 with {\arrow{>}}},
        postaction={decorate}]
   (-0.4,-1.0)--(-0.4,1.0) ;

 \node at (0.35,0) {...};
\draw[ decoration={markings, mark=at position 0.7 with {\arrow{>}}},
        postaction={decorate}]
   (0.7,-1.0)--(0.7,1.0) ;

\draw [decorate,decoration={brace,amplitude=.15cm},rotate=0] (-.6,1.1) -- (.9,1.1);
\draw (.15,1.5) node  {$n$};
\end{tikzpicture}.$$
If $n$ is a negative integer,
\(
\raisebox{2pt}{\begin{tikzpicture}[mystyle] \draw[->](0,-0.4) -- (0,0.4) node[inner sep=.1cm, below right]{$n$};\end{tikzpicture}}
= \,
\raisebox{2pt}{\begin{tikzpicture}[mystyle] \draw[->](0,0.4) -- (0,-0.4) node[inner sep=.1cm, above right]{$-n$};\end{tikzpicture}}
\)

Let $\iota_n$ denote $n$ vertical strands oriented up.
Let $\beta_n$ denote $n$ parallel strands
that form a bubble oriented counterclockwise.
Let $\alpha_n$ denote a $\beta_{-n}$ inside a $\beta_{n}$.
$$\iota_n = 
\raisebox{-8pt}{\tikz[scale=0.8, every node/.style={transform shape}] \draw[->](0,-0.4) -- (0,0.4) node[below right]{$n$};}
\quad 
\beta_n =
\raisebox{-4pt}{\tikz[scale=0.8, every node/.style={transform shape}] \draw[->] (0,0.7) +(10:0.2) arc(10:400:0.2) node[right]{$n$};}
\quad 
\alpha_n =
\raisebox{2pt}{\begin{tikzpicture}[mystyle]
\draw[->] (0.5,0.0)  arc(0:400:0.5) node[right]{$n$};
\draw[<-] (0.15,0.0)  arc(0:400:0.15) node[right]{$\scriptstyle{n}$};
\end{tikzpicture}}
.
$$

\begin{lem} 
\label{lem:teleport}
Suppose $x \in \mathcal{PSPA}_0$ 
and $y$ is a sequence of $2n$ vertical strands
such that $n$ are oriented up
and $n$ are oriented down.
Then $x \otimes y = y \otimes x$.
\end{lem}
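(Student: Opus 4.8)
The plan is to ``tunnel'' the closed diagram $x$ through the wall of strands formed by $y$: temporarily open a strand-free horizontal gap in the wall using the pop-switch relations, slide $x$ through the gap, and then close the wall back up. Two features make this possible. First, the bubble-bursting relation lets us insert the empty diagram in the disguised form $\epsilon = \frac{1}{q+q^{-1}}(\beta_1+\beta_{-1})$ anywhere we like at no cost; this is how we manufacture the bubbles that the pop-switch relations consume. Second, the hypothesis that $y$ is \emph{balanced}, with $n$ strands up and $n$ down, is exactly what allows its strands to be organized into antiparallel pairs to which the pop-switch relations apply.

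I would first treat the base case in which $y$ is a single antiparallel pair. Insert $\frac{1}{q+q^{-1}}(\beta_1+\beta_{-1})$ between $x$ and the pair. In the $\beta_1$ summand apply the first pop-switch relation, and in the $\beta_{-1}$ summand the second; in each case the bubble is consumed and the two through-strands are replaced by a cap above and a cup below, so a strand-free gap opens where the wall used to be. After shrinking $x$ into a small disk, slide it through this gap by a planar isotopy so that it lies to the right of the turnbacks. Running the two relations backwards restores the wall, now with a bubble on its left and $x$ on its right, and recombining the two summands via bubble-bursting turns $\frac{1}{q+q^{-1}}(\beta_1+\beta_{-1})$ back into $\epsilon$, leaving exactly $y\otimes x$.

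For a general balanced $y$ the same idea works, but the gap must be opened across the entire wall at once. Here I would first record a combinatorial fact: any word in two symbols with equally many of each admits a non-crossing perfect matching into adjacent antiparallel pairs, since a nonempty balanced word always contains two adjacent strands of opposite orientation, which we may match, delete, and then induct on. Reading this matching from innermost to outermost, each pair is adjacent at the moment we reach it, so it can be opened by the base-case move; the caps accumulate above the chosen height and the cups below it, and once every pair is opened no strand crosses that height and $x$ slides through. Reversing all the moves and collapsing all the inserted bubble-sums by bubble-bursting then yields $y\otimes x$. Pairs of the form $\downarrow\uparrow$ are handled by the rotations of the stated relations, which are available because planar-algebra relations may be applied in any orientation.

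I expect the main obstacle to be precisely this passage to a general, arbitrarily ordered bundle. Because the pop-switch relation is local to an \emph{adjacent} antiparallel pair and the strand order along the wall is fixed, distant strands cannot simply be paired off; one must verify that opening the matched pairs in the nested order really does clear a full horizontal gap, and that all the orientations of the caps, cups, and inserted bubbles match up globally so that the final bubble-bursting recombination returns the original wall unchanged.
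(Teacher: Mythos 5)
Your overall strategy is exactly the paper's: its entire proof is the two\-/sentence sketch ``use the pop-switch relation repeatedly to create a gap and pass $x$ through, then restore the strands,'' and your non-crossing matching of the balanced word, the innermost-to-outermost opening order, and the reversal bookkeeping are a correct filling-in of what that sketch leaves implicit.

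There is, however, one concrete step that does not work as written, and it is precisely the one you flagged: the claim that pairs of the form $\downarrow\uparrow$ are handled by ``insert $\epsilon=\tfrac{1}{q+q^{-1}}(\beta_1+\beta_{-1})$ and pop, using rotations of the stated relations.'' Rotating a pop-switch relation by a quarter turn exchanges the roles of the through-strand pair and the cap--cup pair, and the bubble is carried along with its region; the rotated relations therefore have the form $\downarrow\uparrow \;=\; \text{cap}+\text{cup}+\text{bubble}$, with the bubble appearing on the \emph{turnback} side of the equation, and no rotation yields a relation of the form $\text{bubble}+\downarrow\uparrow \;=\; \text{cap}+\text{cup}$. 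So for that orientation of adjacent pair your inserted bubble has nothing to consume it, and the insert-and-pop move stalls. The repair is immediate and actually simplifies the argument: such a pair is opened \emph{directly} by the rotated relation, emitting a bubble into the pocket above the cap or below the cup; after $x$ slides through, the reversed relation re-absorbs that bubble and restores the pair. Treating each pair by whichever of the two mechanisms its orientation admits, the proof closes up correctly; indeed, for the directly-opened pairs no bubble-bursting insertion is needed at all, which is presumably why the paper's proof never invokes that relation.
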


\begin{proof}
Use the pop-switch relation repeatedly to create a gap
and pass $x$ through.
Then use the pop-switch relation repeatedly
to restore the original $2n$ vertical strands.
\end{proof}

\begin{lem} {\bf The multi-pop-switch relations}
The pop-switch relations hold for multiple strands.
\label{lem:multips}
$$\begin{tikzpicture}[mystyle] 

\draw[->] (-0.6,0)  arc(40:-320:0.2) node[right]{$n$};

\draw[
        decoration={markings, mark=at position 0.8 with {\arrow{>}}},
        postaction={decorate}
        ]
        (-0.0,-0.9) --(-0.0,0.9); 
\draw (0.25,0.45) node  {$n$};

\draw[
        decoration={markings, mark=at position 0.8 with {\arrow{>}}},
        postaction={decorate}
        ]
        (0.65,0.9) --(0.65,-0.9); 
\draw (0.9,-0.45) node  {$n$};

\end{tikzpicture} = \begin{tikzpicture}[mystyle] 

%

\arc{.9}{.4}{0}{n}{>}
\arc{-.9}{-.4}{0}{n}{<}

\end{tikzpicture},
\quad
\begin{tikzpicture}[mystyle] 


\draw[->] (-0.6,0)  arc(0:400:0.2) node[right]{$n$};

\draw[
        decoration={markings, mark=at position 0.2 with {\arrow{<}}},
        postaction={decorate}
        ]
        (-0.0,-0.9) --(-0.0,0.9); 
\draw (0.25,-0.45) node  {$n$};

\draw[
        decoration={markings, mark=at position 0.2 with {\arrow{<}}},
        postaction={decorate}
        ]
        (0.65,0.9) --(0.65,-0.9); 
\draw (0.9,0.45) node  {$n$};

\end{tikzpicture} = \begin{tikzpicture}[mystyle] 


%

\arc{.9}{.4}{0}{n}{<}
\arc{-.9}{-.4}{0}{n}{>}

\end{tikzpicture}$$
\end{lem}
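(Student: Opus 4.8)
The plan is to argue by induction on $n$, taking the single-strand pop-switch relations of Definition \ref{def:pspa} as the base case $n=1$. Since the two relations differ only by reversing every arrow, I would treat them in parallel and write out the argument for the first one. The inductive step should peel one loop off the bubble and one strand off each bundle: I want to apply the single-strand relation to one loop $C$ of $\beta_n$ together with one up-strand and one down-strand, producing a single cap joining their tops and a single cup joining their bottoms, and then recognize the leftover as $\beta_{n-1}$ sitting next to a balanced bundle of $2(n-1)$ strands.

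In more detail, I would first isotope the picture so that $C$, a chosen up-strand, and a chosen down-strand occupy a disk in which they appear exactly in the configuration (loop)(up)(down) of the base relation, with the remaining $\beta_{n-1}$ and the remaining $n-1$ up- and $n-1$ down-strands lying inside. Applying the single-strand pop-switch in that disk replaces $C$ and the chosen pair by the outermost cap and cup, and what is left inside is precisely $\beta_{n-1}$ next to a balanced bundle of $2(n-1)$ strands. By the inductive hypothesis that inner diagram equals the $(n-1)$-strand turnback; nesting it inside the cap and cup just produced gives the $n$-strand turnback on the right-hand side, completing the step. The bookkeeping of which strand pairs with which cap is routine once a single clean application of the base relation is available.

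The main obstacle is exactly the phrase ``isotope so that the local picture matches.'' In the diagram as drawn, the only adjacent up--down pair is the middle one, and no loop of $\beta_n$ sits beside it: the loops lie to one side behind a wall of equally-oriented strands, and that wall cannot be crossed, so one cannot simply slide a loop into position. This is the same difficulty resolved in Lemma \ref{lem:teleport}, and I expect to handle it the same way: the block standing between the strands I want to use is itself balanced, so I can use the single-strand pop-switch relations repeatedly to open a gap, carry the offending balanced material out of the way, bring the loop and the pair into the required configuration, and afterwards restore the strands. I expect this ``create a gap and pass through'' rearrangement, rather than the induction itself, to be the only delicate point, and once it is in place the collapse to the nested turnback is immediate.
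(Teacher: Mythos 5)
Your overall framework (induction on $n$, base case the defining relations, teleport-style moves to fix adjacency) uses the right ingredients, but the order in which you deploy them is reversed, and the repair you propose for the resulting adjacency problem is not actually available. You want to apply the single-strand relation to the outermost loop $C$, the leftmost up-strand and the rightmost down-strand \emph{first}, with everything else ``lying inside.'' To apply the relation there must be a disk whose intersection with the diagram is exactly (loop)(up)(down); but the $n-1$ intermediate up-strands and $n-1$ intermediate down-strands run from the bottom boundary to the top boundary, so they separate the chosen up-strand from the chosen down-strand, and every disk joining the two must cross them. Moreover $\beta_{n-1}$ is nested inside the closed loop $C$, so any disk containing $C$ contains $\beta_{n-1}$ as well; no isotopy changes either fact. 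Your proposed fix --- ``the block in between is balanced, so open a gap, carry it out of the way, and restore it afterwards'' --- misapplies Lemma \ref{lem:teleport}: that lemma and its gap-opening technique move a \emph{closed} diagram $x \in \mathcal{PSPA}_0$ across balanced strands; they cannot relocate the intermediate strands themselves, which are anchored to the boundary. If instead you pinch the intermediate strands open (the pop-switch relation read backwards) and thread the outer application through the gap, you cannot ``afterwards restore the strands'': the newly created outer cap and cup run straight through every disk in which an un-pinching relation would have to be applied, and if you skip the restoration you are left with $2(n-1)$ like-oriented loops trapped inside the turnbacks, which are not removable in $\mathcal{PSPA}$ (a single loop is not a scalar; only the specific sum in the bubble-bursting relation, or a nested $\alpha_n$ next to enough parallel strands as in Corollary \ref{cor:ia}, can be eliminated).

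The paper's proof does the two steps in the opposite order, and this is essential rather than cosmetic. First the innermost $\beta_{-k}$ is teleported out of the bubble across \emph{two} strands --- the outer loop's edge together with the adjacent up-strand form a balanced pair, so Lemma \ref{lem:teleport} genuinely applies --- which makes the inner material a contiguous copy of the case $n=k$ configuration; the inductive hypothesis is applied to it \emph{first}, converting it into turnbacks of bounded height. Only then does a horizontal channel exist between those caps and cups, through which a disk containing the outer loop and pieces of the two outermost strands can pass, so that the single-strand relation can finish the step. With your ordering, that disk never exists and the inductive step cannot be carried out.
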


\begin{proof} Without loss of generality, consider the first equality.
Induct on $n$.
The case $n = 1$ is the pop-switch relations.
For the case $n = k + 1$, move the innermost $\beta_{-k}$ across two strands using the previous lemma.  Then 
use the case $n = k$, and finally the case $n = 1$.
\end{proof}

\begin{cor}\label{cor:ia}
$\iota_k\otimes \alpha_n = \iota_k$ and $\iota_{-k}\otimes \alpha_{-n} = \iota_{-k}$ for $k\geq n\geq 0$.
\end{cor}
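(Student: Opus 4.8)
The plan is to argue by induction on $n$, deriving the second identity from the first by reversing every strand orientation, which is a symmetry of the relations in Definition~\ref{def:pspa} (the two pop-switch relations are interchanged and the bubble-bursting relation is symmetric) and which carries $\iota_k$ to $\iota_{-k}$ and $\alpha_n$ to $\alpha_{-n}$. So it suffices to show $\iota_k \otimes \alpha_n = \iota_k$ for $k \ge n \ge 0$. For the base case $n = 0$ the diagram $\alpha_0$ is empty, so $\iota_k \otimes \alpha_0 = \iota_k \otimes \epsilon = \iota_k$.

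For the inductive step I would peel off the outermost layer of the nested bubble. Write $\alpha_n$ as a copy of $\alpha_{n-1}$ surrounded by one counterclockwise loop (the outer strand of $\beta_n$) with one clockwise loop (the outer strand of $\beta_{-n}$) just inside it. Using $k \ge n \ge 1$, I would isotope one up-strand of $\iota_k$ so that it runs alongside the left side of this outermost pair, producing locally a single bubble flanked by an up-strand and a down-strand, matching the left-hand side of a pop-switch relation. Applying that relation converts the configuration into a cap and a cup, which reconnect the freed strand back into $\iota_k$ and simultaneously cap off the two outermost loops, leaving $\iota_k \otimes \alpha_{n-1}$. Here Lemma~\ref{lem:teleport} is used to slide the closed inner diagram $\alpha_{n-1}$ out of the way during the manipulation, since it commutes with any balanced bundle of strands. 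Since $k \ge n > n-1$, the induction hypothesis gives $\iota_k \otimes \alpha_{n-1} = \iota_k$, completing the step. Alternatively, one can apply the multi-pop-switch relation of Lemma~\ref{lem:multips} to all $n$ layers simultaneously, using $n$ of the $k \ge n$ up-strands, and collapse the result in a single pass.

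The main obstacle is the geometric bookkeeping: the strands of $\alpha_n$ are arranged radially (nested), whereas the pop-switch relation is stated for a bubble sitting to one side of an up-down pair of vertical strands. The crux is therefore to exhibit, after an isotopy and with the aid of Lemma~\ref{lem:teleport}, a local sub-picture inside $\iota_k \otimes \alpha_n$ that literally matches the left-hand side of a (multi-)pop-switch relation, and then to check that the orientations of the resulting caps and cups are exactly those needed to splice the freed up-strand back into $\iota_k$ without leaving behind any stray closed loop. Confirming that no extra scalar from the bubble-bursting relation is introduced, and that the hypothesis $k \ge n$ is precisely what guarantees enough up-strands to carry out every reconnection, is the part I expect to require the most care.
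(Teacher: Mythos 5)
Your reduction of the second identity to the first via the global orientation-reversal symmetry is fine (the paper just says ``the other case is similar''), and your base case is trivially correct. But the main inductive step rests on a misreading of $\alpha_n$. By definition, $\alpha_n$ is $\beta_{-n}$ nested inside $\beta_n$: reading from the outside in, the first $n$ circles are \emph{all} counterclockwise and the last $n$ are all clockwise. So for $n\geq 2$ the outermost clockwise loop is not ``just inside'' the outermost counterclockwise loop; they are separated by $n-1$ counterclockwise circles. The configuration you propose to peel --- one counterclockwise loop, one clockwise loop immediately inside it, and $\alpha_{n-1}$ inside that --- simply does not occur in $\iota_k\otimes\alpha_n$, and you cannot manufacture it with Lemma~\ref{lem:teleport}: that lemma only moves closed diagrams across \emph{balanced} bundles, whereas rearranging the nesting of $\alpha_n$ into your form would require carrying clockwise circles outward across the unbalanced bundle of counterclockwise circles. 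There is a second, independent problem: a bubble \emph{flanked} by an up-strand and a down-strand is not the left-hand side of a pop-switch relation. In Definition~\ref{def:pspa} the bubble sits to one side of the anti-parallel pair, in a complementary region meeting the disk boundary in a single arc; a bubble in the middle region (which meets the boundary in two arcs) is not carried to that picture by any isotopy or rotation, so no relation applies there. A telling sanity check: each of your inductive steps borrows only \emph{one} strand of $\iota_k$, so your argument as written would prove $\iota_1\otimes\alpha_n=\iota_1$ for every $n$, making the hypothesis $k\geq n$ superfluous --- but that hypothesis is exactly what the relations demand, since the anti-parallel pairs needed to pop the inner clockwise bundle must all be supplied at once.

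The correct argument is the one you relegate to a closing aside, and it is in fact the paper's proof: apply the multi-pop-switch relation of Lemma~\ref{lem:multips} once, in rotated position, taking the multi-bubble to be the entire innermost $\beta_{-n}$, the $n$ down-strands to be the left halves of the outer counterclockwise circles of $\beta_n$, and the $n$ up-strands to be $n$ strands of $\iota_k$ --- this is precisely where $k\geq n$ enters. The relation replaces this configuration by turnbacks joining those strands of $\iota_k$ to the outer circles, whose remaining halves then become mere detours; straightening them by isotopy returns $\iota_k$, with no stray loops and no scalar. So promote your ``alternative'' sentence to be the whole proof, and discard the layer-by-layer induction.
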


\begin{proof}
Consider $\iota_k\otimes \alpha_n$.
Use the multi-pop-switch relation by popping the innermost $\beta_n$ of the $\alpha_n$.
Then straighten out the $\iota_n$.
The other case is similar.
\end{proof}

\begin{cor}\label{cor:ab}
$$
\raisebox{2pt}{\begin{tikzpicture}[mystyle]
\draw[->] (0.5,0.0)  arc(0:400:0.5) node[right]{$n$};
\draw[->] (0.15,0.0)  arc(0:-680:0.15) ;
\end{tikzpicture}}=\alpha_{n}\otimes\beta_{n-1}
\quad\text{ and }\quad
\raisebox{2pt}{\begin{tikzpicture}[mystyle]
\draw[->] (0.5,0.0)  arc(0:-680:0.5) node[right]{$n$};
\draw[->] (0.15,0.0)  arc(0:400:0.15) ;
\end{tikzpicture}}=\alpha_{-n}\otimes\beta_{-n+1}.
$$
\end{cor}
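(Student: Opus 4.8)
The plan is to read both sides as diagrams built from concentric oriented bubbles and to pass between them using the two tools just developed: Lemma~\ref{lem:teleport} to reposition a bubble, and Lemma~\ref{lem:multips} to merge two antiparallel concentric bubbles into a single doubly-wound one. I will argue the first equation; the second is its mirror image.

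I would start from the right-hand side $\alpha_n \otimes \beta_{n-1}$. By definition $\alpha_n$ is $\beta_{-n}$ nested inside $\beta_n$, so this diagram is a pair of concentric bubbles $\beta_n \supset \beta_{-n}$ sitting beside a separate bubble $\beta_{n-1}$. Each bubble, cut open by a vertical line, presents a block of antiparallel strands: $\beta_m$ contributes $m$ strands running up on one side and $m$ running down on the other. Hence by Lemma~\ref{lem:teleport} I can slide $\beta_{n-1}$ first across the $2n$ antiparallel strands of the outer $\beta_n$, and then across the $2n$ antiparallel strands of $\beta_{-n}$, depositing it in the central disk. The result is three concentric bubbles $\beta_n \supset \beta_{-n} \supset \beta_{n-1}$.

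Next I would apply the multi-pop-switch relation of Lemma~\ref{lem:multips} to the inner antiparallel pair $\beta_{-n} \supset \beta_{n-1}$: the clockwise $\beta_{-n}$ meets the counterclockwise $\beta_{n-1}$ along their overlapping strands, and popping replaces those strands by nested turnbacks. This reconnects the two inner bubbles into a single bundle that runs outward and back, namely the clockwise bundle winding twice that appears inside the outer $\beta_n$ on the left-hand side. Since the outer $\beta_n$ is untouched throughout, this produces exactly the left-hand diagram.

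Finally, the second equation is obtained by reversing every orientation, which exchanges $\beta_m \leftrightarrow \beta_{-m}$ and $\alpha_m \leftrightarrow \alpha_{-m}$ and is a symmetry of all the defining relations; the first equation then transforms into the second with $n$ replaced by $-n$. The step needing the most care is the multi-pop-switch merge: I must check that reconnecting $\beta_{-n}$ with $\beta_{n-1}$ (rather than with a full $\beta_{-n}$) yields precisely a twofold winding with the correct strand count and orientation, so that the degree drops by exactly one in the way that reproduces the drawn diagram. This bookkeeping of strand counts and winding number is the main obstacle.
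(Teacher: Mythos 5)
There is a genuine gap, and it starts with a misreading of the left-hand diagram. The inner curve in the statement is a \emph{single embedded clockwise circle}, i.e.\ a $\beta_{-1}$ nested inside $\beta_n$; the TikZ arc through $-680^\circ$ simply retraces the same circle (just as the counterclockwise bubbles are drawn with $400^\circ$ arcs), and in any case a closed strand ``winding twice'' cannot occur in a planar tangle: strands are disjoint embedded curves, and an embedded closed curve in the plane winds exactly once around any point it encloses. You can confirm the intended reading from how Corollary~\ref{cor:ab} is used in the proof of Lemma~\ref{lem:bubbs}: bursting the innermost loop of $\beta_n$ there leaves precisely a $\beta_{n-1}$ with a $\beta_{-1}$ inside it, and the corollary is applied to that picture. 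Because your target object does not exist, your key step cannot be right; and independently of the target, Lemma~\ref{lem:multips} cannot do what you ask of it. That lemma exchanges a bubble sitting beside two antiparallel bundles \emph{of the same size} with nested turnbacks; it never changes strands into anything ``doubly wound,'' and you are applying it to nested bundles of sizes $n$ and $n-1$ with no accompanying bubble. The bookkeeping worry you flag at the end is not a detail to be checked later --- it is exactly where the argument breaks.

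The rest of your plan is sound and is, in fact, the paper's proof run backwards. Teleporting $\beta_{n-1}$ into the central disk of $\alpha_n$ is a correct single application of Lemma~\ref{lem:teleport}, across the $2n$-strand wall formed by the right-hand sides of $\beta_n$ ($n$ strands up) and of $\beta_{-n}$ ($n$ strands down); note it is one crossing of that combined wall, not the two separate crossings you describe (crossing $\beta_n$ alone presents $n$ parallel strands, where the lemma does not apply). After this move the right side is $\beta_n$ containing $\beta_{-n}$ containing $\beta_{n-1}$, and what is then needed is a \emph{cancellation}, not a merge: the $n-1$ innermost circles of $\beta_{-n}$ together with $\beta_{n-1}$ form a copy of $\alpha_{-(n-1)}$ sitting in the central region, and removing it by a Corollary~\ref{cor:ia}-type move leaves $\beta_n$ with a single $\beta_{-1}$ inside --- the actual left-hand side. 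For comparison, the paper argues in the other direction: it isotopes the outer $n-1$ circles of $\beta_n$ into shapes with an empty lobe, applies the multi-pop-switch to those $n-1$ strands to split off a free-standing $\beta_{n-1}$ at the cost of creating a $\beta_{-(n-1)}$, and then teleports that $\beta_{-(n-1)}$ into the center to complete $\alpha_n$. Your orientation-reversal reduction of the second equation to the first is fine, since reversing all orientations is a symmetry of the defining relations.
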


\begin{proof}
Start with the left side of the first equality.
Use a multi-pop-switch relation on the $n-1$ strands, as shown below.
$$
\raisebox{2pt}{\begin{tikzpicture}[mystyle]
\draw[->] (0.5,0.0)  arc(0:400:0.5) node[right]{$n$};
\draw[->] (0.15,0.0)  arc(0:-680:0.15) ;
\end{tikzpicture}}=
\raisebox{2pt}{\begin{tikzpicture}[mystyle]
\draw[->] (1.5,0.6)  arc(30:40:1.2) node[right]{$n-1$};
\draw[] (1.5,0.6)  arc(30:330:1.2) ;
\draw[->] (3.5,0.2)  arc(30:40:0.5) node[right]{$n-1$};
\draw[decoration={markings, mark=at position 0.6 with {\arrow{>}}},
        postaction={decorate}]
(3.5,0.2)  arc(30:140:0.5)  arc(-40:-140:0.6) --(1.5,0.6);
\draw[decoration={markings, mark=at position 0.7 with {\arrow{<}}},
        postaction={decorate}] 
(3.5,0.2)  arc(30:-140:0.5)arc(40:145:0.67)--(1.5,-0.6) ;
\draw[->] (0.6,0)  arc(0:400:0.4);
\draw[->] (0.35,0)  arc(0:-680:0.15) ;
\end{tikzpicture}}
=
\raisebox{2pt}{\begin{tikzpicture}[mystyle]
\draw[->] (1.5,0.6)  arc(30:40:1.2) node[right]{$n-1$};
\draw[] (1.5,0.6)  arc(30:400:1.2) ;
\draw[->] (3.5,0.2)  arc(30:40:0.5) node[right]{$n-1$};
\draw[]
(3.5,0.2)  arc(30:400:0.5);
\draw[->] (0.4,0)  arc(0:400:0.4);
\draw[->] (0.15,0)  arc(0:-680:0.15) ;
\draw[->] (0.8,0.1)  arc(0:-680:0.15) node[right]{$\scriptstyle{n-1}$};
\end{tikzpicture}}$$
By Lemma \ref{lem:teleport} we can move the $\beta_{-n+1}$ into the $\alpha_1$ to achieve the result.
$$
=
\raisebox{2pt}{\begin{tikzpicture}[mystyle]
\draw[->] (0.5,0.0)  arc(0:400:0.5) node[right]{$n$};
\draw[<-] (0.15,0.0)  arc(0:400:0.15) node[right]{$\scriptstyle{n}$};
\draw[->] (1,0.0) +(10:0.2) arc(10:400:0.2) node[right]{$n-1$};
\end{tikzpicture}}
=\alpha_{n}\otimes\beta_{n-1}
$$
The second identity is proved similarly.
\end{proof}

\begin{lem} \label{lem:oio}
$\iota_n = \beta_{-n} \otimes \iota_n \otimes \beta_n$.
\end{lem}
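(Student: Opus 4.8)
The plan is to produce the right-hand side starting from $\iota_n$, by first invoking Corollary~\ref{cor:ia} and then relocating a bubble with Lemma~\ref{lem:teleport}, rather than by manipulating $\beta_{-n}\otimes\iota_n\otimes\beta_n$ head-on. It is worth noting why a more direct route fails: one might hope to apply a multi-pop-switch relation to $\iota_n$ together with an adjacent bubble, but that relation replaces a pair of antiparallel strands by a cap and a cup, whereas the strands of $\iota_n$ run from the bottom boundary to the top boundary and so cannot be capped off. This is what forces the argument below.

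First I would take $k=n$ in Corollary~\ref{cor:ia}, giving $\iota_n = \iota_n \otimes \alpha_n$. By definition $\alpha_n$ is a clockwise bundle $\beta_{-n}$ nested inside a counterclockwise bundle $\beta_n$, both sitting to the right of $\iota_n$. Thus, at the height of the bubbles, the strands read from left to right as: the $n$ upward strands of $\iota_n$, then the $n$ downward strands forming the left arc of $\beta_n$, then $\beta_{-n}$, then the $n$ upward strands forming the right arc of $\beta_n$.

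The key step is to slide the inner bubble $\beta_{-n}$ out to the far left. Reading leftward from $\beta_{-n}$, it must cross the $n$ downward strands of the left arc of $\beta_n$ and then the $n$ upward strands of $\iota_n$; after a planar isotopy making these $2n$ strands locally vertical and parallel, this is precisely a balanced bundle of $n$ up and $n$ down strands. Since $\beta_{-n} \in \mathcal{PSPA}_0$ is a closed diagram, Lemma~\ref{lem:teleport} lets it commute past this bundle, landing to the left of $\iota_n$ and leaving behind the now-empty bubble $\beta_n$ to the right of $\iota_n$. This shows $\iota_n \otimes \alpha_n = \beta_{-n} \otimes \iota_n \otimes \beta_n$, which together with the first step yields $\iota_n = \beta_{-n} \otimes \iota_n \otimes \beta_n$.

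The main obstacle is the bookkeeping in the teleport step: one must verify that the $2n$ strands separating $\beta_{-n}$ from the far left really do split into $n$ up and $n$ down, so that Lemma~\ref{lem:teleport} genuinely applies. The orientations cooperate because $\beta_n$ is counterclockwise, so its left arc is oriented downward and balances the upward orientation of $\iota_n$; were the outer bubble oriented the other way the bundle would be unbalanced and the move would be illegal.
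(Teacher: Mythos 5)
Your proof is correct, and it is not circular: Corollary~\ref{cor:ia} and Lemma~\ref{lem:teleport} are both established before Lemma~\ref{lem:oio} and neither depends on it. However, your route is genuinely different from the paper's. The paper's proof does not pass through $\alpha_n$ at all: it isotopes $\iota_n$ into a strand with a detour (an S-shaped wiggle, which locally exhibits a cap and a cup joined around one side) and then applies the multi-pop-switch relation of Lemma~\ref{lem:multips} once, in that wrapped configuration, to convert the wiggle into the pair of bubbles $\beta_{-n}$ and $\beta_n$ flanking a straight $\iota_n$. Your proof instead chains two already-packaged consequences of that relation: first $\iota_n = \iota_n \otimes \alpha_n$ by Corollary~\ref{cor:ia} with $k=n$, then a teleport of the inner $\beta_{-n}$ across the balanced bundle of $2n$ strands (the $n$ up strands of $\iota_n$ and the $n$ down strands of the left arc of $\beta_n$), which is a legitimate use of Lemma~\ref{lem:teleport} inside a sub-disk, exactly as the paper itself uses it in Corollary~\ref{cor:ab}. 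What your approach buys is checkability: every step is an application of a previously stated lemma, with the only geometric content being the orientation bookkeeping you carry out. What the paper's approach buys is economy: one isotopy and one application of the defining relation, with no intermediate corollary needed. One caveat: your opening claim that the ``direct route fails'' is mistaken. Planar algebra relations may be applied in any rotated or wrapped position, so the strands of $\iota_n$ do not need to be literally cappable; creating the cap and cup by an isotopy and then popping them is precisely the paper's proof (and, for that matter, the mechanism behind the proof of Corollary~\ref{cor:ia} that you invoke). This misjudgment does not affect the validity of your argument, but it is worth correcting your mental model of how such relations are applied.
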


\begin{proof}
This follows from the multi-pop switch relations.
$$\iota_n =
\begin{tikzpicture}[mystyle] \draw[->](0,-0.4) -- (0,0.4) node[inner sep=.1cm, below right]{$\scriptstyle{n}$};\end{tikzpicture} 
=
\begin{tikzpicture}[mystyle]
\draw[->] (0,-0.6) -- (0, -0.4) arc(180:90:.2) arc(270:450:.2) arc(270:180:.2) -- (0,0.6) node[right]{$\scriptstyle{n}$};
\end{tikzpicture}
=
\begin{tikzpicture}[mystyle]
\draw[->](0,-0.4) -- (0,0.4) node[inner sep=.1cm, below right]{$\scriptstyle{n}$};
\draw[->] (0.6,0)  arc(0:400:0.15) node[right]{$\scriptstyle{n}$};
\draw[->] (-0.4,0)  arc(0:-680:0.15) node[right]{$\scriptstyle{n}$};
\end{tikzpicture}
= \beta_{-n} \otimes \iota_n \otimes \beta_n
$$


\end{proof}

Now we give some relations involving the Jones-Wenzl idempotents $p_n$. 
First, we need some notation for them.
 We will use a rectangle to represent $p_n$.  It should always be assumed that $p_n \in P_n^n$ even if the strands are not drawn.

\begin{nota}
$p_n = \begin{tikzpicture}[mystyle]

\draw (-.4,-.8) -- (-.4,.8);
\draw (.4,-.8) -- (.4,.8);

\node at (0,.5) {$\ldots$};
\node at (0,-.5) {$\ldots$};

\draw [fill=white] (-0.5,-.3) rectangle (0.5,.3);

\node at (0,0) {$p_n$};

\end{tikzpicture}
 = \begin{tikzpicture}[mystyle]

\draw (0,-.8) -- (0,.8);

\node at (0.0,0.5) [inner sep=0.5,above right] {$\scriptstyle{n}$};

\node at (0,-.5) [inner sep=0.5,below right] {$\scriptstyle{n}$};

\draw [fill=white] (-0.5,-.3) rectangle (0.5,.3);

\end{tikzpicture}
=\begin{tikzpicture}[mystyle]

\draw (0,-0.8) -- (0,0);


\node at (0,-.5) [inner sep=0.5,below right] {$\scriptstyle{n}$};

\draw [fill=white] (-0.5,-.3) rectangle (0.5,.3);

\end{tikzpicture}
=\begin{tikzpicture}[mystyle]

\draw (0,0) -- (0,0.8);

\node at (0.0,0.5) [inner sep=0.5,above right] {$\scriptstyle{n}$};


\draw [fill=white] (-0.5,-.3) rectangle (0.5,.3);

\end{tikzpicture}
$
\end{nota}

We can make use of the the fact that they are uncappable.  

\begin{lem}
\label{lem:arcMove}
\begin{tikzpicture}[mystyle]
\draw [] (-.8,.5) rectangle (.8,0.7);
\draw (-0.2,-0.2) node  {$n$};
\draw[
        decoration={markings, mark=at position 0.8 with {\arrow{<}}},
        postaction={decorate}
        ]  (-.5,.5) -- (-.5, -.5);

\arc{.5}{.4}{.3}{}{>}
\end{tikzpicture}
$=(-1)^{n+1}$
\begin{tikzpicture}[mystyle] 
\draw [] (-.8,.5) rectangle (.8,0.7);
\draw (0.5,-0.2) node  {$n$};
\draw[
        decoration={markings, mark=at position 0.8 with {\arrow{<}}},
        postaction={decorate}
        ]  (.3,.5) -- (.3, -.5);

\arc{.5}{.4}{-.3}{}{<}
\betaP{.9}{0}{n}
\end{tikzpicture}
This relation remains true if all arrows are reversed.
\end{lem}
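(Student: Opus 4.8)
The plan is to follow the cue that $p_n$ is uncappable, and to read both sides as oriented cappings of a single Jones--Wenzl box (the box carries $n+2$ strands: the bundle of $n$ through-strands together with the two strands joined by the arc). The key observation is that uncappability interacts with the defining feature of $\mathcal{PSPA}$ that an unoriented strand is the sum of its two orientations. Capping a fixed adjacent pair of strands of the box by an \emph{unoriented} arc and composing with the box gives $0$; expanding that unoriented arc as the sum of its two oriented versions then yields the sign rule that capping the pair with one orientation equals $(-1)$ times capping it with the reversed orientation. This single rule is the source of every factor of $-1$, and it already disposes of the base case: with no through-strands the two sides of the identity differ only by reversing the cup, so the identity reads (cup one way) $=(-1)^{1}(\text{cup the other way})$, the bubble $\beta_0$ being empty.

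For $n>0$ I would induct on $n$, moving the arc past one further through-strand at each step. The geometric engine is the multi-pop-switch relation of Lemma~\ref{lem:multips}, which trades a bubble sitting beside a like-oriented bundle for a nested pair of arcs; read in the appropriate direction it transports the oriented turnback of the arc across a strand while depositing a small bubble. Lemma~\ref{lem:teleport}, together with Corollaries~\ref{cor:ia} and~\ref{cor:ab}, then lets me slide that deposited bubble into the one already present and consolidate them, promoting the accumulated $\beta_{n-1}$ to $\beta_n$. Each such slide presents the box with a fresh oriented cap, so the sign rule above applies once more and multiplies the running sign by $-1$. After crossing all $n$ strands the sign is $(-1)\cdot(-1)^{n}=(-1)^{n+1}$ and the accumulated bubble is exactly $\beta_n$, which is the right-hand side. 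The version with all arrows reversed follows verbatim, using the second pop-switch relation and the second term of the bubble-bursting relation throughout.

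The step I expect to fight with is the bookkeeping of orientations rather than of signs, since the signs are forced once the orientations are pinned down. After each pop-switch move one must check that the configuration produced is again a genuine oriented cap against the box, so that uncappability may legitimately be reapplied, and one must confirm that the bubbles created accumulate as $\beta_n$ and not as a reversed bubble $\beta_{-n}$ or as a nested $\alpha_k$. Arranging the orientations so that Lemma~\ref{lem:teleport} and Corollaries~\ref{cor:ia}--\ref{cor:ab} apply cleanly, leaving no obstructing $\alpha_k$, is the delicate point; everything else is routine once that is in place.
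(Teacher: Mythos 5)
Your proposal follows essentially the same route as the paper: your base case is exactly the paper's $n=0$ argument (an unoriented cap kills the Jones--Wenzl box, so reversing an oriented cap costs $-1$), and your strand-by-strand induction---a pop-switch move to carry the arc across one strand, Lemma~\ref{lem:teleport} and its corollaries to consolidate the deposited bubble into the accumulated $\beta_{n}$, with one sign-rule flip per crossing---is precisely the paper's inductive step (apply the case $n=k$, then the case $n=1$, then Lemma~\ref{lem:teleport}), just unrolled one strand at a time. Your sign count $(-1)\cdot(-1)^{n}=(-1)^{n+1}$ matches the paper's bookkeeping, and the delicate point you flag (tracking orientations so the pop-switch pattern and the bubbles come out right) is the same one the paper's diagrams are devoted to.
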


\begin{proof}
For the case $n = 0$,
use the fact that an unoriented cap gives zero.
For the case $n = 1$,
use the case $n = 0$ and the pop-switch relation.

For the general case,
use induction on $n$.  Start by using the case $n=k$ as follows:

\begin{align*}
\begin{tikzpicture}[mystyle]
\draw [] (-.9,.5) rectangle (.8,0.7);
\draw (-0.3,-0.2) node  {$\scriptstyle{k+1}$};
\draw[
        decoration={markings, mark=at position 0.8 with {\arrow{<}}},
        postaction={decorate}
        ]  (-.7,.5) -- (-.7, -.5);
\arc{.5}{.4}{.3}{}{>}
\end{tikzpicture}
&=
\begin{tikzpicture}[mystyle]
\draw [] (-.9,.5) rectangle (.8,0.7);
\draw (-0.3,-0.2) node  {$\scriptstyle{k}$};
\draw[
        decoration={markings, mark=at position 0.8 with {\arrow{<}}},
        postaction={decorate}
        ]  (-.5,.5) -- (-.5, -.5);
\arc{.5}{.4}{.3}{}{>}
\draw[
        decoration={markings, mark=at position 0.8 with {\arrow{<}}},
        postaction={decorate}
        ]  (-.8,.5) -- (-.8, -.5);
\end{tikzpicture}
=(-1)^{k+1}
\begin{tikzpicture}[mystyle] 
\draw [] (-.9,.5) rectangle (.8,0.7);
\draw[
        decoration={markings, mark=at position 0.8 with {\arrow{<}}},
        postaction={decorate}
        ]  (-.8,.5) -- (-.8, -.5);
\draw (0.5,-0.2) node  {$\scriptstyle{k}$};
\draw[
        decoration={markings, mark=at position 0.8 with {\arrow{<}}},
        postaction={decorate}
        ]  (.3,.5) -- (.3, -.5);
\arc{.5}{.4}{-.3}{}{<}
\betaP{.9}{0}{\scriptstyle{k}}
\end{tikzpicture}
 \intertext{Next use the case $n=1$, followed by Lemma \ref{lem:teleport}, to achieve the result.}
&=(-1)^{k+2}
\begin{tikzpicture}[mystyle] 
\draw [] (-.9,.5) rectangle (.8,0.7);
\draw[
        decoration={markings, mark=at position 0.8 with {\arrow{<}}},
        postaction={decorate}
        ]  (-.8,.5) -- (-.8, -.5);
\draw (0.5,-0.2) node  {$\scriptstyle{k}$};
\draw[
        decoration={markings, mark=at position 0.8 with {\arrow{<}}},
        postaction={decorate}
        ]  (.3,.5) -- (.3, -.5);
\arc{.5}{.4}{-.3}{}{>}
\betaP{.9}{0}{\scriptstyle{k}}
\end{tikzpicture}
=(-1)^{k+2}
\begin{tikzpicture}[mystyle] 
\draw [] (-.9,.5) rectangle (1.2,0.7);
\draw[
        decoration={markings, mark=at position 0.8 with {\arrow{<}}},
        postaction={decorate}
        ]  (0,.5) -- (0, -.5);
\draw (0.7,-0.2) node  {$\scriptstyle{k}$};
\draw[
        decoration={markings, mark=at position 0.8 with {\arrow{<}}},
        postaction={decorate}
        ]  (.5,.5) -- (.5, -.5);
\draw[<-] (0.2,0) +(400:0.1) arc(440:50:0.15);
\arc{.5}{.4}{-.5}{}{<}
\betaP{1.1}{0}{\scriptstyle{k}}
\end{tikzpicture}\\
&=(-1)^{k+2}
\begin{tikzpicture}[mystyle] 
\draw [] (-.9,.5) rectangle (.8,0.7);
\draw (0.7,-0.4) node  {$\scriptstyle{k+1}$};
\draw[
        decoration={markings, mark=at position 0.8 with {\arrow{<}}},
        postaction={decorate}
        ]  (.3,.5) -- (.3, -.5);
\arc{.5}{.4}{-.3}{}{<}
\betaP{1}{0.1}{\scriptstyle{k+1}}
\end{tikzpicture}
\end{align*}

\end{proof}

\section{Proof of the main theorem}
\label{sec:proof}

The aim of this section is to prove the following.

\begin{thm}
\label{thm:main}
The $n$th Jones-Wenzl idempotent is isomorphic to a direct sum of $n+1$ diagrams:

\begin{align*}
p_n &\simeq
\bigoplus_{i=0}^{n}  \iota_{-i} \otimes \iota_{n} \otimes \iota_{-i}\\
&=
\raisebox{2pt}{ \begin{tikzpicture}[mystyle]
    \draw[->](0,0) -- (0,1) node[inner sep=.1cm,below right]{$n$};
    \end{tikzpicture}
}
\oplus \,
\raisebox{2pt}{\begin{tikzpicture}[mystyle] 
    \draw[->](0,1) -- (0,0);
    \draw[->](0.5,0) -- (0.5,1) node[inner sep=0.1cm,below right]{$n$};
    \draw[->](1,1) -- (1,0);
    \end{tikzpicture}
}
\, \oplus \,
\dots
\oplus
\raisebox{2pt}{ \begin{tikzpicture}[mystyle]
    \draw[->](0,1) -- (0,0) node[inner sep=0.1cm,above right]{$n$};
    \draw[->](0.5,0) -- (0.5,1) node[inner sep=0.1cm,below right]{$n$};
    \draw[->](1,1) -- (1,0) node[inner sep=0.1cm,above right]{$n$};
    \end{tikzpicture}
}
\end{align*}
\end{thm}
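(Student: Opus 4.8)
The plan is to build the isomorphism through Lemma~\ref{lem:dirSum}: I will produce idempotents $q_0,\dots,q_n$ lying in the endomorphism algebra of $p_n$ that are pairwise orthogonal, sum to $p_n$, and satisfy $q_i \simeq \iota_{-i}\otimes\iota_n\otimes\iota_{-i}$. Each such isomorphism I will witness by an explicit pair of morphisms $e_i\colon \iota_{-i}\otimes\iota_n\otimes\iota_{-i}\to p_n$ and $\bar e_i\colon p_n\to\iota_{-i}\otimes\iota_n\otimes\iota_{-i}$, built from oriented arcs that cap the $i$ flanking strands on each side against the outer strands of the Jones--Wenzl box, with the $n$ central up-strands running straight into it. Setting $q_i=e_i\bar e_i$ then forces $q_i\in\mathrm{End}(p_n)$, since $e_i$ and $\bar e_i$ factor through the box by construction.

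First I would verify that $\bar e_i\,e_i=\id$ on the object $\iota_{-i}\otimes\iota_n\otimes\iota_{-i}$. The composite presents each flanking arc as capping a strand entering the box, exactly the situation of Lemma~\ref{lem:arcMove}; pushing all $2i$ arcs through the box trades each for a bubble $\beta_{\pm}$ at the cost of a sign, while Lemma~\ref{lem:teleport} and Lemma~\ref{lem:multips} let me slide the resulting bubbles into standard nested position. Then Corollary~\ref{cor:ia} and Lemma~\ref{lem:oio} absorb the nested bubbles, collapsing $\bar e_i\,e_i$ to the bare identity; in particular $q_i^2=e_i(\bar e_i e_i)\bar e_i=q_i$, so each $q_i$ is an idempotent isomorphic to the claimed summand, with Corollary~\ref{cor:ab} available for reorganizing the $\alpha$--$\beta$ configurations as needed.

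Next I would establish orthogonality. For $i\neq j$ the mixed composite $\bar e_i\,e_j$ leaves at least one flanking strand capping the Jones--Wenzl box on a side where it cannot be absorbed into a bubble, so the uncappable property of $p_n$ annihilates it; hence $q_iq_j=e_i(\bar e_i e_j)\bar e_j=0$. This step is where the uncappability of $p_n$, rather than the pop-switch calculus, does the work, and I expect it to be comparatively short once the maps $e_i,\bar e_i$ are pinned down.

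The hard part, and the final step, is completeness: $q_0+\dots+q_n=p_n$. Here I would expand $p_n$ through the embedding of Temperley--Lieb (each unoriented strand a sum of its two orientations) and regroup the oriented diagrams by the number $i$ of turned-back strands. Matching coefficients after the repeated sign contributions of Lemma~\ref{lem:arcMove} is exactly a quantum-binomial identity, so I anticipate an induction on $n$ that runs the Wenzl recursion for $p_n$ in parallel with the recursion of Corollary~\ref{cor:q}, which is otherwise unused and is surely present for this purpose. Keeping the signs consistent with the quantum numbers, and invoking that $q$ is not a root of unity so that no $[k]$ vanishes in the normalizations, is where the real bookkeeping lies; once it checks out, Lemma~\ref{lem:dirSum} delivers $p_n\simeq\bigoplus_{i=0}^n \iota_{-i}\otimes\iota_n\otimes\iota_{-i}$.
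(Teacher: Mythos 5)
Your skeleton is in fact the paper's: the paper also produces $n+1$ pairwise orthogonal idempotents, proves the summand isomorphisms with explicit cap/cup morphisms (the $f,g$ of Lemma \ref{lem:isom}), and the quantum-binomial recursion of Corollary \ref{cor:q} plays exactly the role you predicted (via Lemma \ref{lem:quantPT}). But two of your steps have genuine gaps. First, the maps $e_i,\bar e_i$ as you describe them cannot be drawn: if all $n$ central up-strands run straight into the Jones--Wenzl box, its $n$ legs are exhausted, so the $2i$ flanking strands cannot also attach to the box; nor can the left flank cap against the right flank, since both are oriented downward at the boundary while an arc must have one inward and one outward endpoint. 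The consistent routing (the paper's choice) caps the left flank $\iota_{-i}$ against $i$ of the central up-strands and sends the remaining $n-i$ up-strands together with the right flank into the box. Even then bare diagrams are not enough: each of the paper's $f$ and $g$ carries a floating bubble of $i$ strands (pop-switching the caps leaves loops behind, and a single oriented loop is not a scalar), and $f$ carries the normalization $(-1)^{i(n-i)}\qbinom{n}{i}$. Without these decorations neither $\bar e_i e_i = \id$ nor $\sum_i e_i\bar e_i = p_n$ can hold.

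The deeper problem is your ordering. Your first step, $\bar e_i e_i=\id$, cannot be established with Lemma \ref{lem:arcMove}, Lemma \ref{lem:teleport} and bubble absorption alone: every one of those moves preserves the presence of the Jones--Wenzl box, while the identity diagram has none, so at some point you must expand $p_n$ into Temperley--Lieb terms. Orientation-count conservation (not uncappability per se --- in the composites $\bar e_i e_j$ no arc actually has both endpoints on the box) kills all but the extreme nested terms, and the surviving coefficient of $p_n$ then enters your scalar; that coefficient is precisely the quantum-binomial bookkeeping you postponed to the completeness step. The paper escapes this circle by working in the opposite order: it defines the summands as $p^k_l$ by orienting the middle $\id_n$ in $p_n\id_n p_n$, so that completeness and orthogonality are trivial; it then proves the normalization $p^k_l=(-1)^{kl}\qbinom{k+l}{k}X^k_l$ (Lemma \ref{lem:pkl}) --- this is the real content and is essentially your completeness claim --- and only afterwards obtains your step A nearly for free: $g\circ f$ is some scalar times the identity by the expansion argument, and that scalar must be $1$ because $f\circ g$ is already known to equal $p^k_l$, the identity on that object. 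So your plan becomes viable only after you (i) repair the maps as above, adding the bubbles and the scalar, and (ii) invert your schedule, proving completeness before, not after, the claim $\bar e_i e_i=\id$.
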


\begin{proof}
Since $p_n$ is an idempotent, $p_n=p_n^2=p_n \id_n p_n$, where $\id_n$ is $n$ nonoriented parallel strands, the multiplicative identity in $\mathcal{TL}_n^n$.
Now write $\id_n$ as a sum of
$2^n$ different ways of orienting $n$ vertical strands.
Break this sum into $n+1$ sums depending on 
how many strands are oriented up.
 
\begin{defn}
Let $p^k_{n-k}$ denote the sum of $\binom{n}{k}$ diagrams
obtained from $p_n \id_n p_n$
by orienting $k$ strands up and $n-k$ strands down
in the $\id_n$.
\end{defn}
 
Then $p_n = p^0_n + p^1_{n-1} + \dots + p^n_0$.
If $k_1 \neq k_2$, then $p^{k_1}_{n-k_1}p^{k_2}_{n-k_2}=0$.
Thus, by Lemma \ref{lem:dirSum},
$$p_n \simeq p^0_n \oplus p^1_{n-1} \oplus \dots \oplus p^n_0.$$
It remains only to show
$$p^k_l \simeq \iota_{-l} \otimes \iota_{k+l} \otimes \iota_{-l}.$$
This is done in Lemma \ref{lem:isom}.
\end{proof}

To prove Lemma \ref{lem:isom},
we first define $X^k_l$,
which we will show is equal a scalar times $p^k_l$ in Lemma \ref{lem:pkl}.

\begin{defn} \label{defn:x}


$$X^k_l = \vcenter{\hbox{\includegraphics[scale=.275]{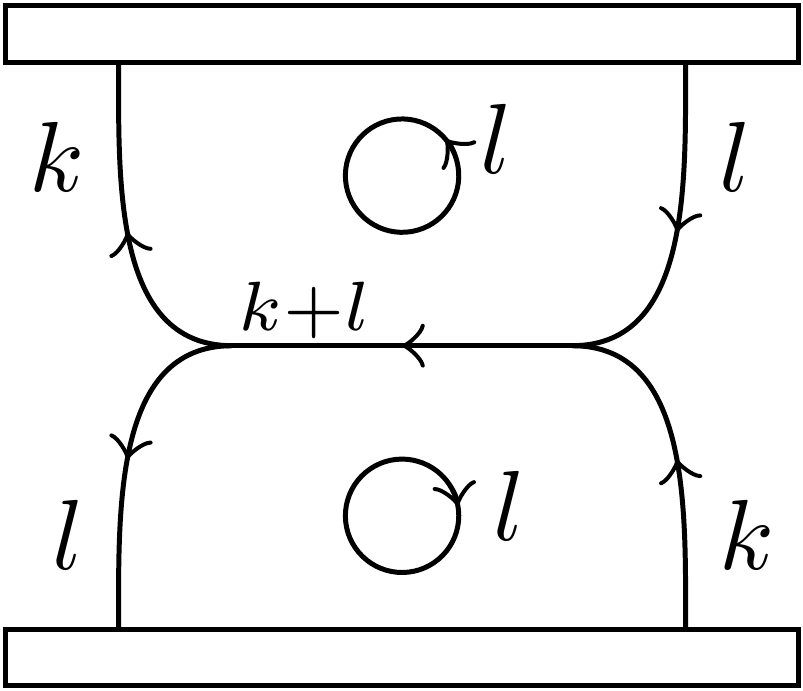}}}$$
\end{defn}

Lemmas \ref{lem:xkm1l} and \ref{lem:xklm1} are similar and begin the inductive step of the proof of Lemma \ref{lem:pkl}.

\begin{lem}\label{lem:xkm1l}
$p_{k+l} (X^{k-1}_l \otimes \iota_1) p_{k+l} = (-1)^{l} X^k_l \otimes \beta_{-l}.$
\end{lem}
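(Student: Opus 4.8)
The plan is to prove the identity by a direct diagrammatic computation, starting from the left-hand side $p_{k+l}(X^{k-1}_l \otimes \iota_1)p_{k+l}$ and manipulating it step by step until it matches $(-1)^l X^k_l \otimes \beta_{-l}$. The workhorse will be Lemma \ref{lem:arcMove}, which slides an oriented arc through a Jones-Wenzl box at the cost of a sign and a leftover bubble; the supporting tools will be the teleport Lemma \ref{lem:teleport} and the multi-pop-switch relations of Lemma \ref{lem:multips}, all applied against the uncappability of $p_{k+l}$.

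First I would expand $X^{k-1}_l$ according to Definition \ref{defn:x} and redraw $X^{k-1}_l \otimes \iota_1$ so that the newly adjoined up-strand $\iota_1$ sits next to the bottom copy of $p_{k+l}$ in a position where it can be capped against the idempotent. Because $p_{k+l}$ is uncappable, the only surviving contributions are those in which the diagram is forced into the precise shape to which Lemma \ref{lem:arcMove} applies; this is exactly the mechanism that turns the extra straight strand of $X^{k-1}_l \otimes \iota_1$ into the turned-back strand characteristic of $X^k_l$. I would then invoke the arrow-reversed form of Lemma \ref{lem:arcMove} so that the deposited bubble is oriented as a $\beta_{-l}$ rather than a $\beta_{+l}$.

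Next I would pull the relevant arc through the idempotent. The key point is that, after the setup, the arc crosses a box in which only $l-1$ of the strands are active (the remaining $k$ being pinned by the idempotent structure), so that the factor $(-1)^{(l-1)+1}=(-1)^l$ predicted by Lemma \ref{lem:arcMove} appears with the claimed exponent rather than one depending on $k$. Along the way I would use Lemma \ref{lem:teleport} to commute the emerging bubble past the balanced vertical strands and the multi-pop-switch relations of Lemma \ref{lem:multips} to consolidate the parallel strands, so that what remains is recognizably $X^k_l$ tensored with the single leftover bubble $\beta_{-l}$.

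The main obstacle I anticipate is the sign and bubble bookkeeping, together with confirming that the rearranged diagram is genuinely $X^k_l$ as drawn in Definition \ref{defn:x}. I must check that the arc crossing contributes the stated total $(-1)^l$ and not a $k$-dependent sign, and that the strands shed on the far side of the box truly assemble into one $\beta_{-l}$ rather than some nested or mis-oriented configuration. I would manage this by isolating a single down-strand, verifying directly that it contributes one factor of $-1$ and one strand to the resulting bubble via Lemma \ref{lem:arcMove}, and then iterating over the $l$ down-strands while keeping careful track of the orientations fixed in Definition \ref{defn:x}.
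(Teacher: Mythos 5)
Your overall route is the paper's route: rewrite the single diagram $p_{k+l}(X^{k-1}_l\otimes\iota_1)p_{k+l}$ by a pop-switch move, one application of Lemma \ref{lem:arcMove} across the $l-1$ down-strands (giving the sign $(-1)^{(l-1)+1}=(-1)^l$), and then teleport/pop-switch bookkeeping. That much is sound and matches the paper. But the places where your plan defers the work are exactly where it goes wrong. First, the bubble accounting: an application of Lemma \ref{lem:arcMove} across $l-1$ strands deposits a bubble with $l-1$ strands, not $l$, so the final $\beta_{-l}$ cannot be ``the deposited bubble'' of a single (arrow-reversed) arc move, as you claim. In the paper's proof the arc move deposits a $\beta_{l-1}$, and the $\beta_{-l}$ of the right-hand side is assembled afterwards: Lemma \ref{lem:oio} is used to straighten the displaced arc into $\beta_{-1}\otimes\iota_1\otimes\beta_1$, and the resulting $\beta_{\pm1}$ bubbles are then teleported (Lemma \ref{lem:teleport}) and merged with the $(l-1)$-strand bubbles to produce both the internal bubbles of $X^k_l$ and the free $\beta_{-l}$. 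Your plan never straightens the arc --- Lemma \ref{lem:oio}, or an equivalent use of the multi-pop-switch relation, is absent --- so the ``consolidation'' step cannot terminate in $X^k_l\otimes\beta_{-l}$ as described.

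Second, your fallback verification --- isolate a single down-strand, check that it contributes one factor of $-1$ and one strand to the bubble, then iterate over the $l$ down-strands --- contradicts Lemma \ref{lem:arcMove} itself: the $n=1$ case of that lemma carries the sign $(-1)^{1+1}=+1$, so a single-strand crossing contributes $+1$, not $-1$, and the sign $(-1)^{n+1}$ is not multiplicative over strands (iterating single crossings would give $+1$, never $(-1)^l$). This also clashes with your own, correct, earlier statement that the sign comes from one crossing of $l-1$ strands. A smaller conceptual point: there are no ``surviving contributions'' for uncappability to select, because the left-hand side is one diagram, not a sum; the rewrite is deterministic, and the uncappability of $p_{k+l}$ enters only inside the proof of Lemma \ref{lem:arcMove}, not directly in this argument.
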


\begin{proof}
 \begin{align*}
 p_{k+l} (X^{k-1}_l \otimes \iota_1) p_{k+l} &= \vcenter{\hbox{\includegraphics[scale=.275]{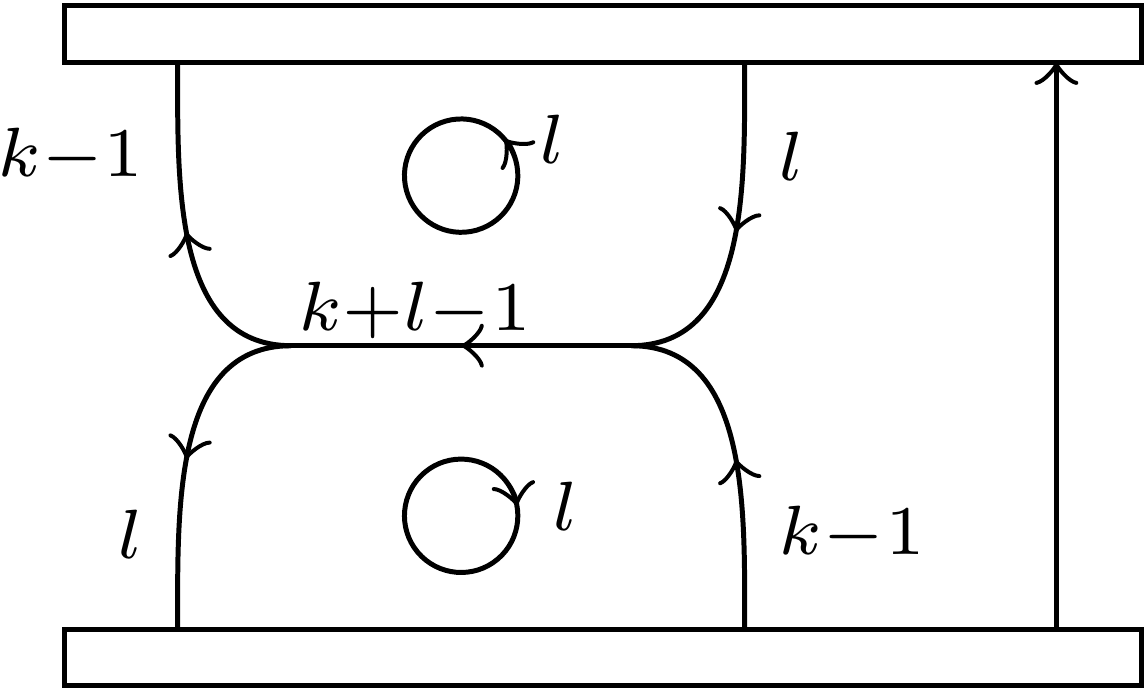}}}\\
 \intertext{By a pop-switch relation we have the following.}\\
 &= \vcenter{\hbox{\includegraphics[scale=.275]{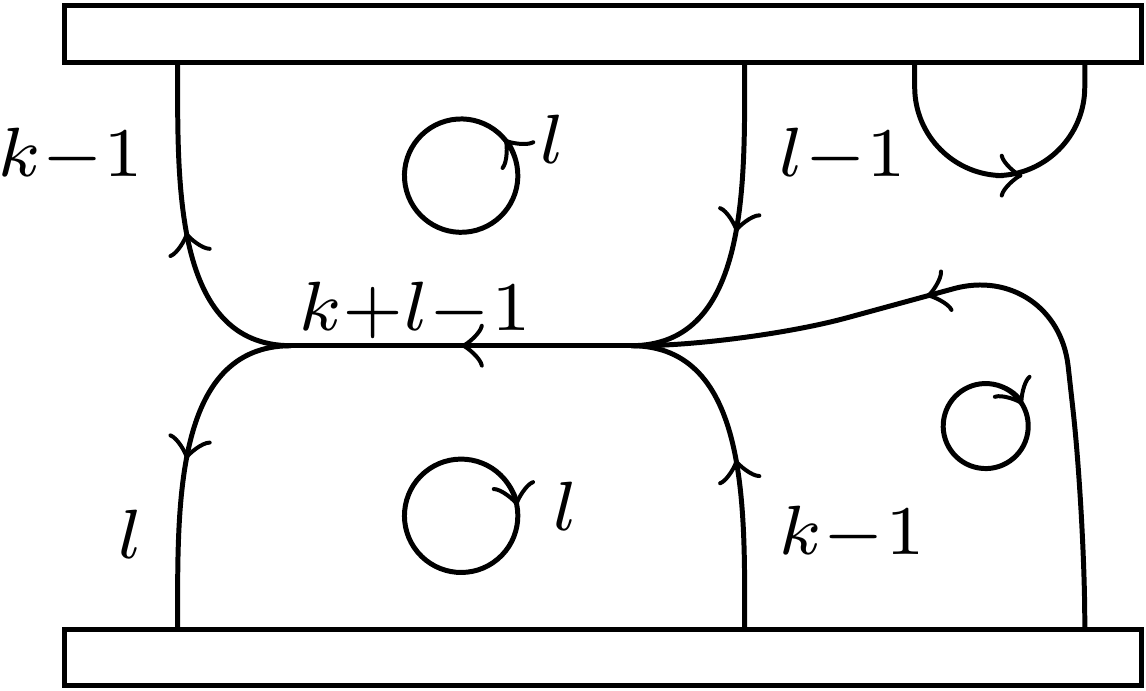}}}\\
 \intertext{Then by Lemma \ref{lem:arcMove} we can move the arc across the $l-1$ strands creating a $\beta_{l-1}$ on the right.  Next we use Lemma \ref{lem:oio} to replace the arc with $\beta_{-1}\otimes\iota_{1}\otimes\beta_{1}$.}\\
 &=(-1)^l \vcenter{\hbox{\includegraphics[scale=.275]{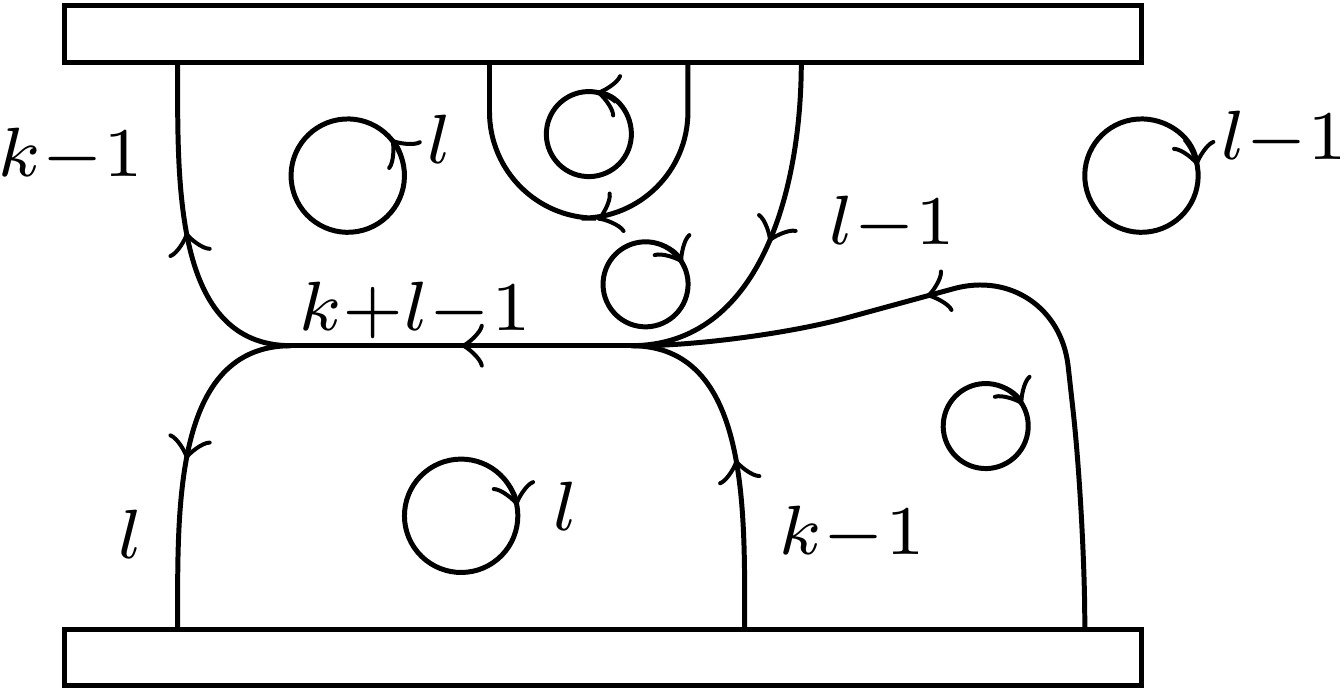}}}\\
 \intertext{Move the innermost $\beta_1$ from the $\beta_l$ to the far right across $l-1$ strands in both directions by Lemma \ref{lem:teleport}.}\\
 &=(-1)^l \vcenter{\hbox{\includegraphics[scale=.275]{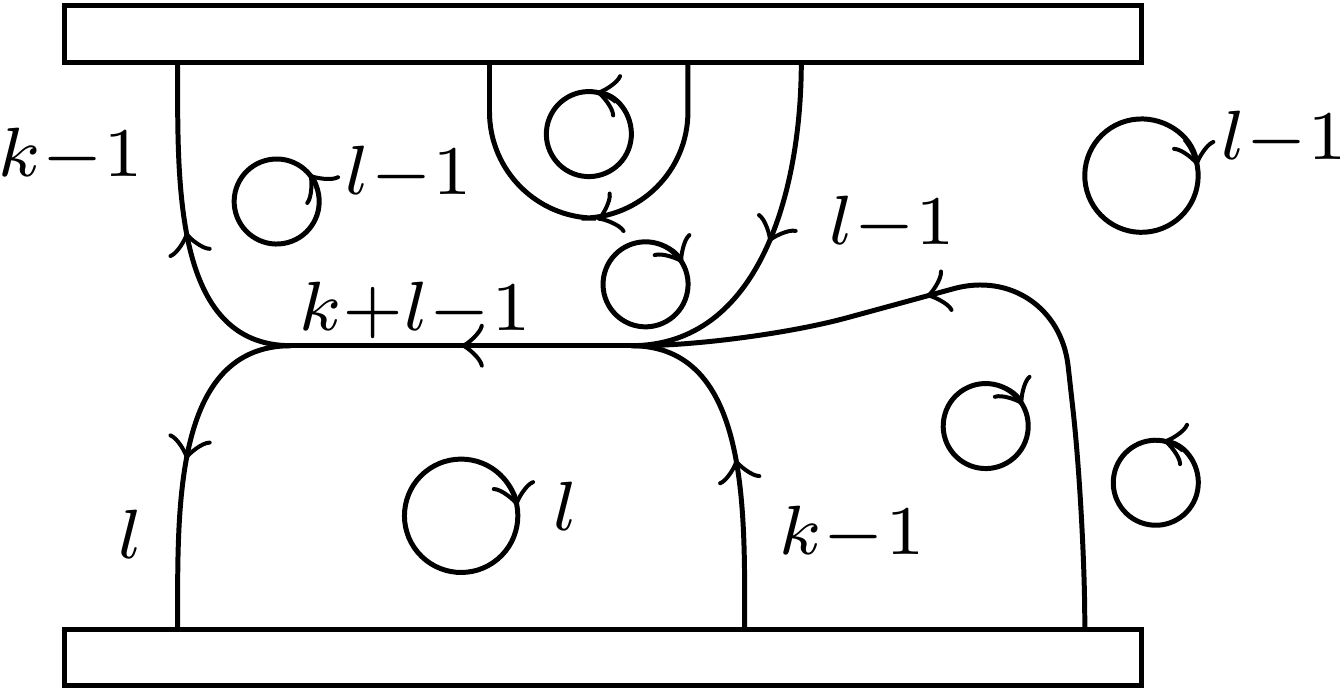}}}\\
  \intertext{Then remove the two bubbles on the bottom right of the diagram by Lemma \ref{lem:oio}.}\\
 &=(-1)^l\vcenter{\hbox{\includegraphics[scale=.275]{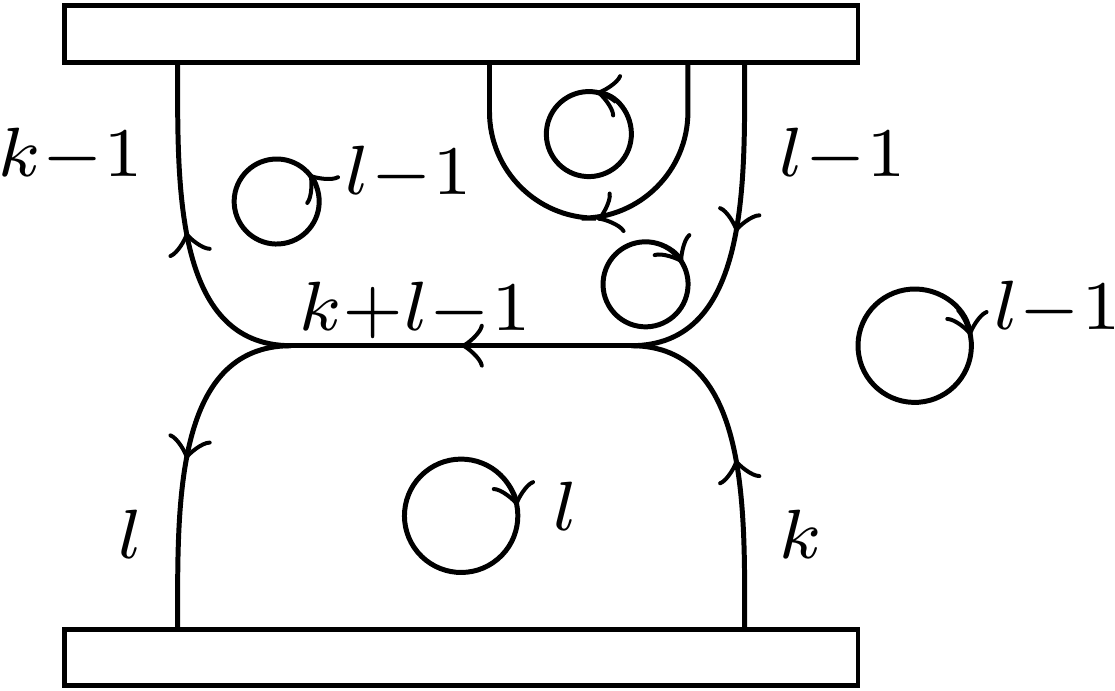}}}\\
   \intertext{Lastly, by Lemma \ref{lem:teleport} move the $\beta_{l-1}$ into the $\beta_{1}$ and the $\beta_{-1}$ into the $\beta_{-(l-1)}$.}\\
 &=(-1)^l \vcenter{\hbox{\includegraphics[scale=.275]{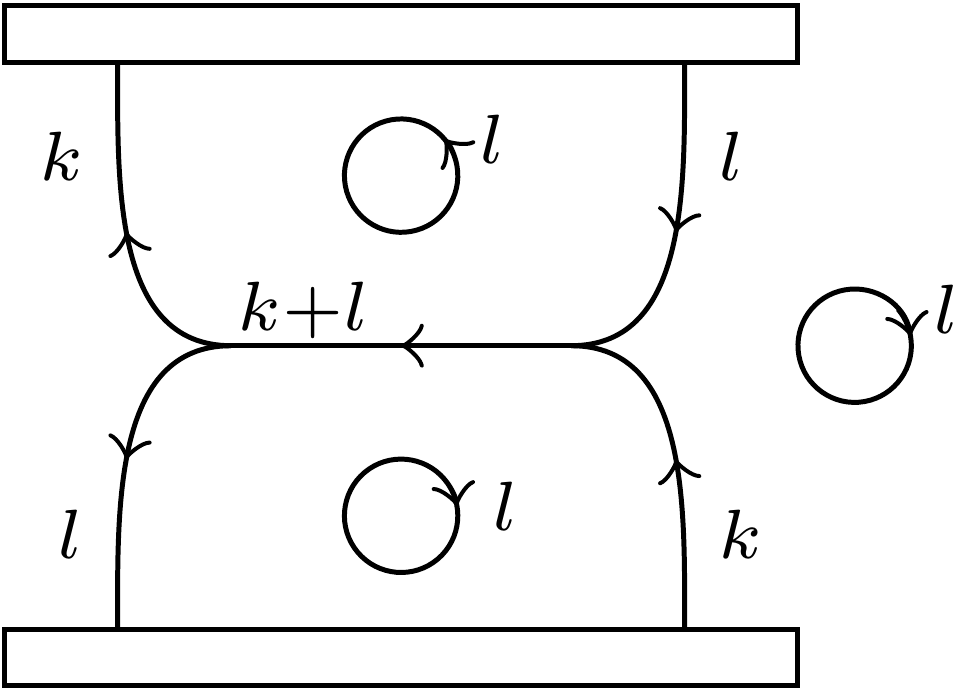}}}\\[.3cm]
 &=(-1)^{l} X^k_l \otimes \beta_{-l}\\
 \end{align*}
\end{proof}

\begin{lem}\label{lem:xklm1}
$p_{k+l} (X^k_{l-1} \otimes \iota_{-1}) p_{k+l} =
  (-1)^{k} X^k_l \otimes \beta_k.$
\end{lem}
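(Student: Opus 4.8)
The plan is to mirror the proof of Lemma~\ref{lem:xkm1l} move for move, exploiting the symmetry of the whole construction under simultaneous reversal of all orientations and exchange of the roles of $k$ and $l$. Under this symmetry the adjoined up-strand $\iota_1$ is replaced by the down-strand $\iota_{-1}$, the cap in Lemma~\ref{lem:arcMove} is replaced by its arrow-reversed version (which that lemma explicitly permits), and every bubble $\beta_{\pm m}$ produced from the $l$-block is replaced by the corresponding bubble from the $k$-block. In particular the sign $(-1)^l$, which in Lemma~\ref{lem:xkm1l} arises by applying Lemma~\ref{lem:arcMove} to $l-1$ strands, is replaced by $(-1)^k$ arising from applying it to $k-1$ strands, and the outgoing $\beta_{-l}$ is replaced by $\beta_k$.

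Concretely, I would first expand $p_{k+l}(X^k_{l-1}\otimes\iota_{-1})p_{k+l}$ into its full diagram and apply a pop-switch relation at the adjoined strand $\iota_{-1}$, producing a cap abutting the lower copy of $p_{k+l}$, just as in the first step of Lemma~\ref{lem:xkm1l}. Next I would apply the arrow-reversed form of Lemma~\ref{lem:arcMove} to slide this cap across the $k-1$ strands of the $k$-block: this creates a $\beta_{k-1}$ and contributes the sign $(-1)^{(k-1)+1}=(-1)^k$, and simultaneously I would use Lemma~\ref{lem:oio} to split the cap into a single strand flanked by two oppositely oriented bubbles, exactly as in Lemma~\ref{lem:xkm1l}. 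The remaining steps are then Lemmas~\ref{lem:teleport} and~\ref{lem:oio} applied verbatim: teleport the innermost bubble across the $k-1$ strands, cancel the two resulting nested bubbles by Lemma~\ref{lem:oio}, and finally teleport the leftover bubbles together so that they recombine into the outer $\beta_k$ while the complementary bubble is absorbed into the body of $X^k_l$. This yields $(-1)^k X^k_l\otimes\beta_k$.

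The main obstacle is bookkeeping rather than mathematics: since $X^k_l$ is specified in Definition~\ref{defn:x} by a picture, I must identify precisely which bundle of strands the cap crosses and on which side each new bubble lands, so that the final diagram is genuinely $X^k_l$ and not a reflected or rotated variant, and so that no stray sign is introduced. The real content is to verify that the orientation-reversing, $k\leftrightarrow l$ symmetry of Definition~\ref{defn:x} carries each step of Lemma~\ref{lem:xkm1l} to the corresponding step here; once this is checked the computation is line-for-line identical, which is exactly why the two lemmas are described as similar.
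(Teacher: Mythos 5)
Your proposal is correct and follows essentially the same route as the paper: its proof of this lemma is exactly the orientation-reversed, $k \leftrightarrow l$ counterpart of the proof of Lemma \ref{lem:xkm1l}, carried out by the sequence of moves you list (pop-switch at the adjoined strand, Lemma \ref{lem:arcMove} with arrows reversed across the $k-1$ strands giving the sign $(-1)^k$, then Lemma \ref{lem:oio} and repeated use of Lemma \ref{lem:teleport} to consolidate the bubbles into the outer $\beta_k$). The only slip is the bookkeeping detail you yourself flag: the arrow-reversed arc move produces a $\beta_{-(k-1)}$ rather than a $\beta_{k-1}$, and the positive bubbles that ultimately merge into the final $\beta_k$ arise from the subsequent applications of Lemmas \ref{lem:oio} and \ref{lem:teleport}.
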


\begin{proof}
 \begin{align*}
 p_{k+l} (X^{k}_{l-1} \otimes \iota_{-1}) p_{k+l} &=\vcenter{\hbox{\includegraphics[scale=.275]{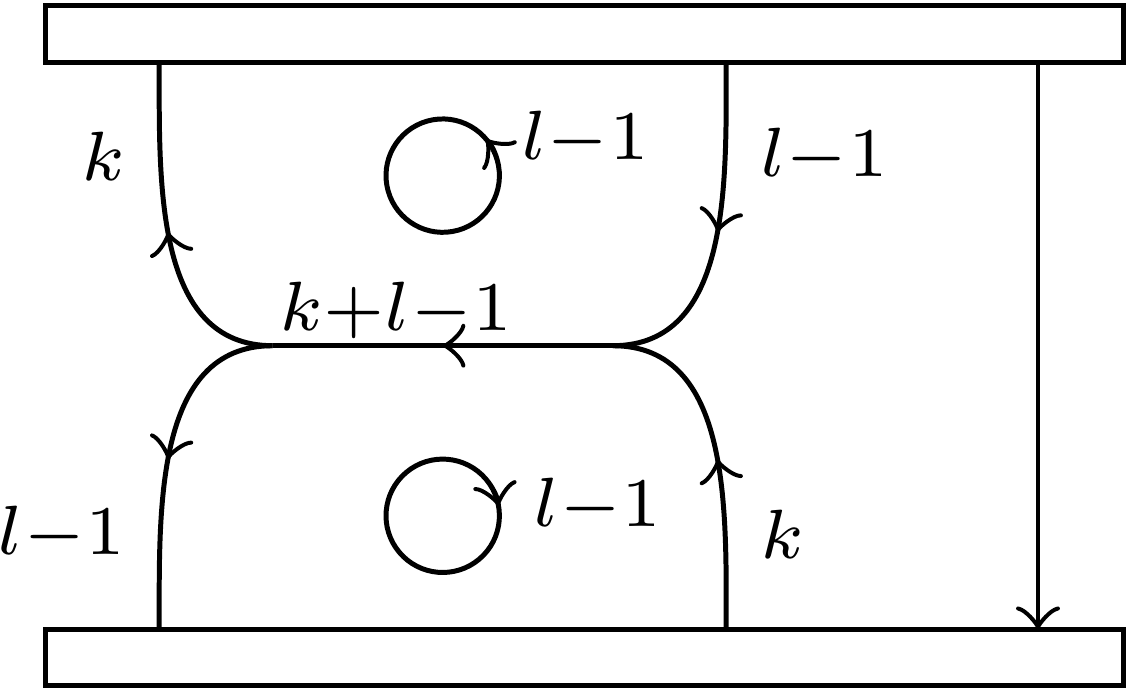}}}\\
 \intertext{By a pop-switch relation we have the following.}\\
 &=\vcenter{\hbox{\includegraphics[scale=.275]{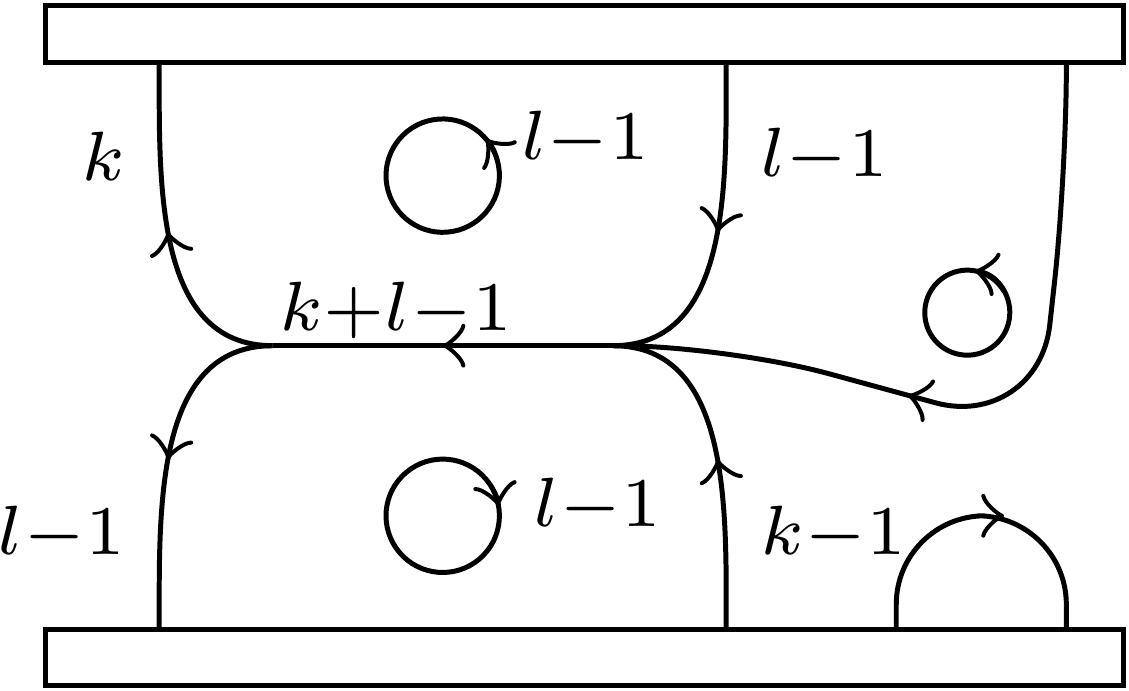}}}\\
 \intertext{Then by Lemma \ref{lem:arcMove} we can move the arc across the $k-1$ strands creating a $\beta_{-(k-1)}$ on the right.  Next we use Lemma \ref{lem:teleport} to move the $\beta_1$ across the $l-1$ strands in both directions into the $\beta_{l-1}$.}\\
 &=(-1)^k \vcenter{\hbox{\includegraphics[scale=.275]{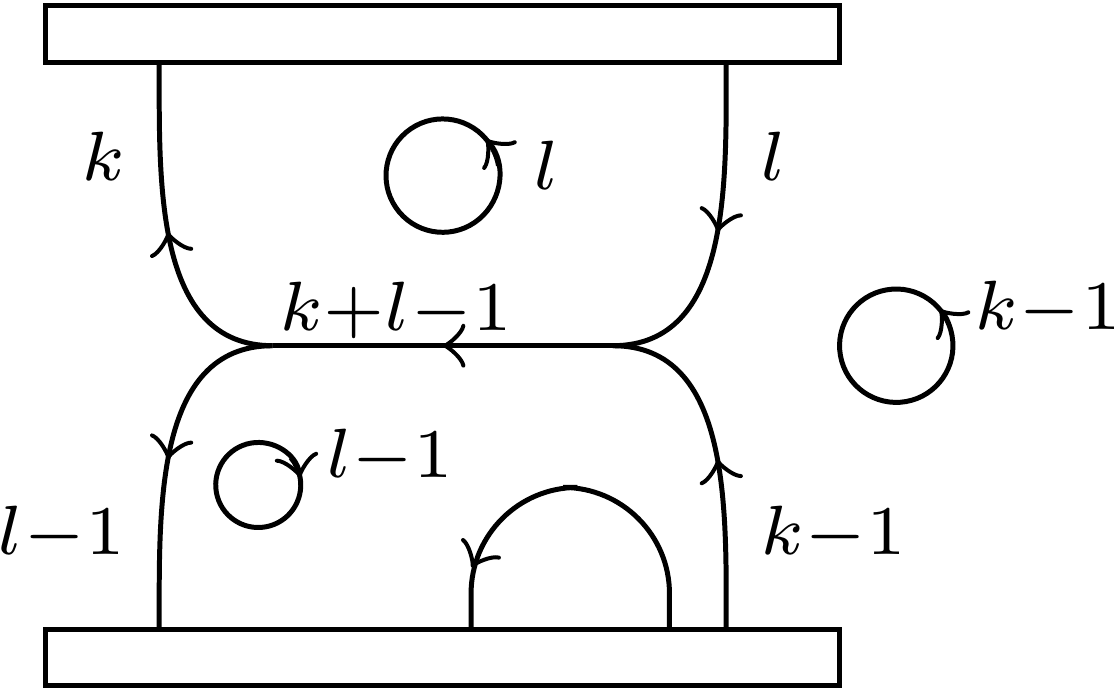}}}\\
 \intertext{Replace the arc with $\beta_{-1}\otimes\iota_{1}\otimes\beta_{1}$.}\\
 &=(-1)^k \vcenter{\hbox{\includegraphics[scale=.275]{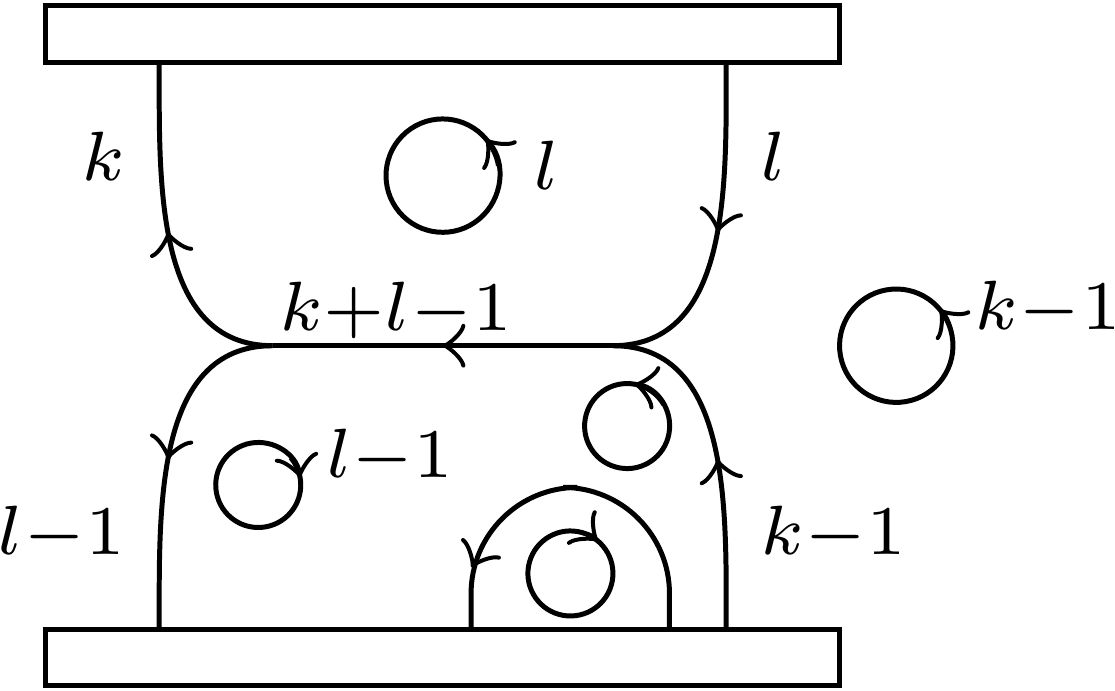}}}\\
   \intertext{Lastly, by Lemma \ref{lem:teleport} move the $\beta_1$ across the $k-1$ strands in both directions into the $\beta_{k-1}$.  By the same lemma, move the $\beta_{-(l-1)}$ into the $\beta_{-1}$.}\\
 &=(-1)^k \vcenter{\hbox{\includegraphics[scale=.275]{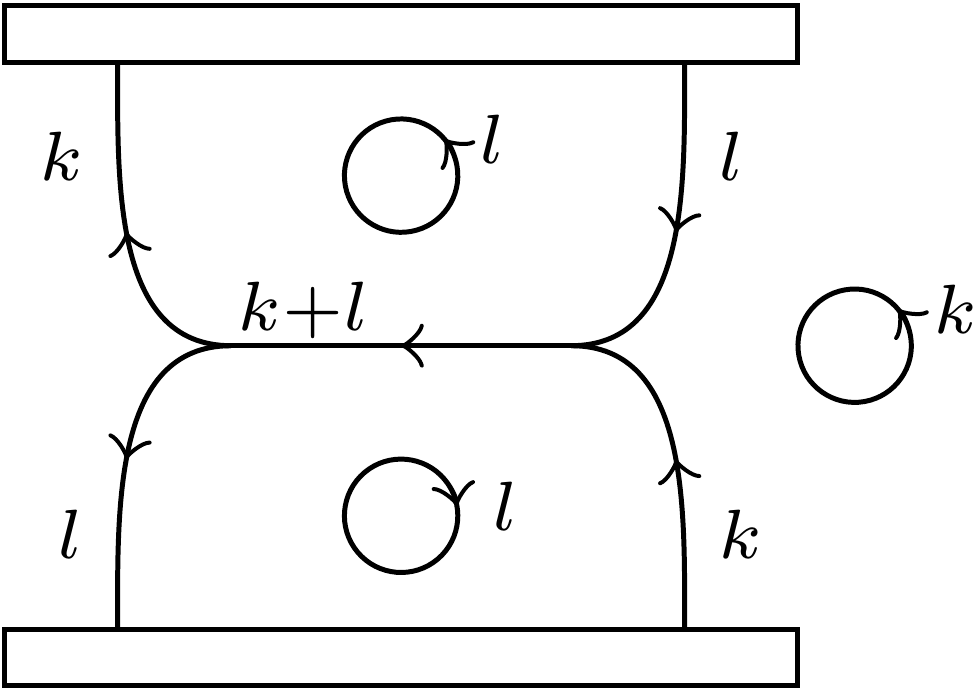}}}\\
 &=(-1)^{k} X^k_l \otimes \beta_{k}\\
 \end{align*}
\end{proof}

Lemma \ref{lem:bubbs} is the key to proving Lemma \ref{lem:quantPT}, which is required to complete the proof of Lemma \ref{lem:pkl}.
  It is worth noting that  in Lemma \ref{lem:quantPT} the $X^k_l$ merely acts as a catalyst to provide enough strands to use \ref{lem:bubbs}. 
All that is necessary is the presence of $\iota_{-l+1}$ and $\iota_{k-1}$ on the left as specified in Lemma \ref{lem:bubbs} for the purpose of implementing Corollary \ref{cor:ia}.  

\begin{lem}\label{lem:bubbs}
For $k\geq n-1$,
$$\iota_k\otimes \beta_n = [n]\iota_k\otimes\beta_1-[n-1]\iota_k$$
and
$$\iota_{-k}\otimes \beta_{-n} = [n]\iota_{-k}\otimes\beta_{-1}-[n-1]\iota_{-k}$$
\end{lem}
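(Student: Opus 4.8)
The plan is to induct on $n$, proving the first identity; the second then follows by reversing every orientation, a symmetry of $\mathcal{PSPA}$ that exchanges $\beta_m \leftrightarrow \beta_{-m}$ and $\iota_k \leftrightarrow \iota_{-k}$ while fixing the quantum numbers. For the base cases, $\beta_0$ is the empty diagram so $\iota_k \otimes \beta_0 = \iota_k$, which matches the right-hand side because $[0]=0$ and $[-1]=-1$; and $n=1$ is the tautology $\iota_k\otimes\beta_1 = [1]\,\iota_k\otimes\beta_1 - [0]\,\iota_k$. So I would assume $n \ge 2$ and that the identity holds for smaller indices. This is legitimate: since $k \ge n-1$ we also have $k \ge (n-1)-1$ and $k \ge (n-2)-1$, so the hypothesis applies to $\beta_{n-1}$ and to $\beta_{n-2}$.

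First I would establish the purely geometric recurrence
\[
\iota_k \otimes \beta_n = (q+q^{-1})\,\iota_k\otimes\beta_{n-1} - \iota_k\otimes\beta_{n-2},\qquad k \ge n-1 .
\]
The innermost of the $n$ concentric counterclockwise loops of $\beta_n$ bounds an empty disk, so the bubble-bursting relation applies to it: rewriting that innermost counterclockwise loop as $(q+q^{-1})$ times the empty diagram minus the corresponding clockwise loop gives $\iota_k\otimes\beta_n = (q+q^{-1})\,\iota_k\otimes\beta_{n-1} - \iota_k\otimes\gamma_n$, where $\gamma_n$ denotes $\beta_{n-1}$ with a single clockwise loop inserted at its center.

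The crux, and the step I expect to be the main obstacle, is the reduction $\iota_k\otimes\gamma_n = \iota_k\otimes\beta_{n-2}$. The innermost clockwise loop of $\gamma_n$ together with the counterclockwise loop immediately enclosing it is exactly $\alpha_1$, and the remaining $n-2$ counterclockwise loops form a $\beta_{n-2}$ surrounding it. Corollary~\ref{cor:ia} lets me delete an $\alpha_1$ that sits next to at least one upward strand, but here the $\alpha_1$ is buried under $n-2$ enclosing loops, so I cannot invoke it directly. Instead I would peel the enclosing loops off one at a time and bring the central $\alpha_1$ out to the spectator strands $\iota_k$, using Corollary~\ref{cor:ab} together with Lemma~\ref{lem:teleport} to manage the nesting; this is a secondary induction on the number of enclosing loops whose base case is precisely Corollary~\ref{cor:ia}. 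This is exactly where the hypothesis $k \ge n-1$ is consumed: heuristically one spectator strand absorbs the $\alpha_1$ and one further strand is needed for each of the $n-2$ enclosing loops that must be threaded past, totalling $n-1$. (When $n=2$ there are no enclosing loops and $\iota_k\otimes\gamma_2 = \iota_k\otimes\alpha_1 = \iota_k = \iota_k\otimes\beta_0$ is literally Corollary~\ref{cor:ia} with $k\ge 1$.)

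Finally I would substitute the induction hypothesis for $\iota_k\otimes\beta_{n-1}$ and $\iota_k\otimes\beta_{n-2}$ into the geometric recurrence and collect the coefficients of $\iota_k\otimes\beta_1$ and of $\iota_k$. Since $q+q^{-1}=[2]$, these coefficients are $[2][n-1]-[n-2]$ and $[2][n-2]-[n-3]$, which equal $[n]$ and $[n-1]$ respectively by Lemma~\ref{lem:q} (taking $k=2$, and $l=n-2$ or $l=n-3$). This yields $\iota_k\otimes\beta_n = [n]\,\iota_k\otimes\beta_1 - [n-1]\,\iota_k$ and completes the induction.
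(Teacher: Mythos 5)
Your proof is correct and follows essentially the same route as the paper's: burst the innermost loop to get $\iota_k\otimes\beta_n = [2]\,\iota_k\otimes\beta_{n-1} - \iota_k\otimes\gamma_n$, identify $\iota_k\otimes\gamma_n$ with $\iota_k\otimes\beta_{n-2}$, then combine the induction hypothesis with the quantum-number identity from Lemma~\ref{lem:q}. The one thing you missed is that the step you flag as the main obstacle needs no secondary induction or peeling: your $\gamma_n$ is literally the left-hand side of Corollary~\ref{cor:ab} with index $n-1$, so a single application gives $\gamma_n = \alpha_{n-1}\otimes\beta_{n-2}$, and then Corollary~\ref{cor:ia} deletes the $\alpha_{n-1}$ — this is exactly where the hypothesis $k\ge n-1$ is consumed, just as in the paper.
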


\begin{proof}
We prove the first identity, since the second is similar.
Consider the case $n=2$ with $k\geq 1$.
Use the bubble-bursting relation on the innermost loop of $\beta_2$.
Corollary \ref{cor:ia} then gives the result.
$$\iota_k\otimes \beta_2 = [2] \iota_k\otimes \beta_1 - \iota_k\otimes \alpha_1 = [2] \iota_k\otimes \beta_1 - \iota_k$$

Now assume $k\geq n-1$.
Use the bubble-bursting relation on the innermost loop of $\beta_n$.
Corollary \ref{cor:ab}, Corollary \ref{cor:ia}, and induction give
\begin{align*}
\iota_k\otimes \beta_n &= [2] \iota_k\otimes \beta_{n-1} - \iota_k\otimes \beta_{n-2} \\
&=[2]([n-1]\iota_k\otimes\beta_1 - [n-2] \iota_k)-([n-2]\iota_k\otimes\beta_1-[n-3]\iota_k)\\
&=([2][n-1]-[n-2])\iota_k\otimes \beta_1-([2][n-]-[n-3])\iota_k\\
&=[n]\iota_k\otimes\beta_1-[n-1]\iota_k
\end{align*}
\end{proof}


\begin{lem}\label{lem:quantPT}
If $k + l = n$ then
$$ \qbinom{n-1}{l}X^k_l\otimes\beta_{-l} + \qbinom{n-1}{k} X^k_l\otimes\beta_k = \qbinom{n}{k}X^k_l.$$
\end{lem}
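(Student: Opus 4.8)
The plan is to eliminate the two bubbles on the left-hand side using Lemma~\ref{lem:bubbs}, then to burst the two single loops that survive, and finally to identify the remaining scalar with $\qbinom{n}{k}$ using the quantum-integer identities of Lemma~\ref{lem:q} and Corollary~\ref{cor:q}. First I would apply Lemma~\ref{lem:bubbs} to each bubble separately. As the remark preceding that lemma emphasizes, in $X^k_l\otimes\beta_k$ the bubble $\beta_k$ sits against a run of at least $k-1$ upward strands coming out of $X^k_l$, and in $X^k_l\otimes\beta_{-l}$ the bubble $\beta_{-l}$ sits against a run of at least $l-1$ downward strands; these are exactly the conditions $k\ge n-1$ needed to apply Lemma~\ref{lem:bubbs} (with $n=k$ and $n=l$ respectively), yielding
$$X^k_l \otimes \beta_{-l} = [l]\,X^k_l \otimes \beta_{-1} - [l-1]\,X^k_l, \qquad X^k_l \otimes \beta_k = [k]\,X^k_l \otimes \beta_1 - [k-1]\,X^k_l.$$
Substituting these into the left-hand side produces two single-loop terms, both appended in the same right-most tensor slot, together with a multiple of $X^k_l$.

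The key point is that the two single-loop terms carry the \emph{same} coefficient: a direct expansion of the definitions gives $\qbinom{n-1}{l}[l] = \qbinom{n-1}{k}[k]$, since both reduce to the same product of quantum integers (using $k+l=n$). Writing $c$ for this common value, the surviving loop terms are $c\,X^k_l\otimes\beta_1 + c\,X^k_l\otimes\beta_{-1} = c\,X^k_l\otimes(\beta_1+\beta_{-1})$, and the bubble-bursting relation $\beta_1+\beta_{-1}=(q+q^{-1})\epsilon=[2]\epsilon$ collapses this to $c[2]\,X^k_l$. At this stage the entire left-hand side has become a scalar multiple of $X^k_l$.

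It then remains to check that this scalar is $\qbinom{n}{k}$. Collecting coefficients, it equals $[2][l]\qbinom{n-1}{l} - [l-1]\qbinom{n-1}{l} - [k-1]\qbinom{n-1}{k}$. I would first apply the quantum-integer recursion $[l+1]=[2][l]-[l-1]$ (a special case of Lemma~\ref{lem:q}) to combine the first two terms into $[l+1]\qbinom{n-1}{l}$. Using the symmetry $\qbinom{n-1}{k}=\qbinom{n-1}{l-1}$ and $n-1=k+l-1$, the scalar becomes $[l+1]\qbinom{k+l-1}{l}-[k-1]\qbinom{k+l-1}{l-1}$, which is precisely the right-hand side of Corollary~\ref{cor:q} and hence equals $\qbinom{k+l}{l}=\qbinom{n}{k}$, completing the argument.

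The step I expect to be the main obstacle is the very first one: justifying rigorously that Lemma~\ref{lem:bubbs} applies, i.e.\ that the bubbles $\beta_{-l}$ and $\beta_k$ genuinely sit against runs of identically oriented parallel strands of lengths $l-1$ and $k-1$ emerging from $X^k_l$, with the correct orientations and positions. Everything after that is quantum-integer bookkeeping, where the only genuine subtlety is recognizing that the equality of the two loop-coefficients is exactly what licenses the bubble-bursting step: were those coefficients unequal, the clockwise and counterclockwise loops could not be combined into a scalar, and the reduction would stall.
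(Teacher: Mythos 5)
Your proposal is correct and follows essentially the same route as the paper's own proof: reduce via Lemma~\ref{lem:bubbs} (justified by the strands that $X^k_l$ provides, exactly as the paper's remark about $\iota_{-l+1}$ and $\iota_{k-1}$ notes), use the coefficient identity $\qbinom{n-1}{l}[l]=\qbinom{n-1}{k}[k]$ together with bubble-bursting to eliminate $\beta_{\pm 1}$, then finish with $[2][l]-[l-1]=[l+1]$ and Corollary~\ref{cor:q}. The only difference is expository: you spell out the intermediate scalar bookkeeping that the paper compresses into one displayed equation.
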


\begin{proof}
Note that every term in the equation contains $X^k_l$.
However, the result will hold so long as there are both a $\iota_{-l+1}$ and $\iota_{k-1}$ on the left of each diagram in order to use Lemma \ref{lem:bubbs}.
Thus it suffices to prove
$$ \qbinom{n-1}{l}([l] \beta_{-1} - [l-1]) + \qbinom{n-1}{k}([k] \beta_1 - [k-1]) = \qbinom{n}{k}. $$
Use the identity
$$\qbinom{n-1}{l} [l] = \qbinom{n-1}{k} [k],$$
and the bubble bursting relation $\beta_{-1} + \beta_1 = [2]$
to eliminate $\beta_{-1}$ and $\beta_1$ from the left side.
Then simplify further using the identity $[2][l] - [l-1] = [l+1]$.
We obtain
$$\qbinom{n-1}{l}[l+1] - \qbinom{n-1}{k}[k-1].$$
By Corollary \ref{cor:q},
this is equal to $\qbinom{n}{k}$, as desired.
\end{proof}

\begin{lem} \label{lem:pkl}
$p^k_l = (-1)^{kl} \qbinom{k+l}{k} X^k_l$
\end{lem}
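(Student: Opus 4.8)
The plan is to induct on $n = k+l$ and reduce everything to the three preceding lemmas. Since $p_n$ is idempotent, each summand of $p^k_l$ has the form $p_n \tau p_n$, where $\tau$ is an orientation of $\id_n$ with $k$ strands up and $l$ strands down. I would group these $\binom{n}{k}$ summands according to the orientation of the rightmost strand, writing $p^k_l = U + D$, where $U$ collects the terms whose last strand points up (so the first $n-1$ strands carry $k-1$ ups and $l$ downs) and $D$ collects the terms whose last strand points down (first $n-1$ strands carrying $k$ ups and $l-1$ downs).

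The key step is to recognize $U$ and $D$ as sandwiched copies of the smaller data $p^{k-1}_l$ and $p^k_{l-1}$. Writing a summand of $U$ as $p_n(\sigma\otimes\iota_1)p_n$, I would invoke the Jones--Wenzl absorption $p_n(p_{n-1}\otimes\id_1) = (p_{n-1}\otimes\id_1)p_n = p_n$. Expanding $\id_1 = \iota_1 + \iota_{-1}$, the summand carrying $\iota_{-1}$ meets the up-oriented last strand of $\sigma\otimes\iota_1$ in two opposed arrowheads and so vanishes, giving $\id_1\circ\iota_1 = \iota_1$; by the interchange law this yields $p_n(\sigma\otimes\iota_1)p_n = p_n\bigl((p_{n-1}\sigma p_{n-1})\otimes\iota_1\bigr)p_n$. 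Summing over $\sigma$ gives $U = p_n(p^{k-1}_l\otimes\iota_1)p_n$, and symmetrically $D = p_n(p^k_{l-1}\otimes\iota_{-1})p_n$. I would then feed in the inductive hypotheses $p^{k-1}_l = (-1)^{(k-1)l}\qbinom{n-1}{k-1}X^{k-1}_l$ and $p^k_{l-1} = (-1)^{k(l-1)}\qbinom{n-1}{k}X^k_{l-1}$ and apply Lemma \ref{lem:xkm1l} and Lemma \ref{lem:xklm1}. Using the symmetry $\qbinom{n-1}{k-1} = \qbinom{n-1}{l}$ and collecting signs (both $(-1)^{(k-1)l+l}$ and $(-1)^{k(l-1)+k}$ reduce to $(-1)^{kl}$), this produces $U = (-1)^{kl}\qbinom{n-1}{l}\,X^k_l\otimes\beta_{-l}$ and $D = (-1)^{kl}\qbinom{n-1}{k}\,X^k_l\otimes\beta_k$.

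Adding these and factoring out $(-1)^{kl}$, the bracketed combination $\qbinom{n-1}{l}X^k_l\otimes\beta_{-l} + \qbinom{n-1}{k}X^k_l\otimes\beta_k$ is exactly the left-hand side of Lemma \ref{lem:quantPT}, which rewrites it as $\qbinom{n}{k}X^k_l$ and completes the induction. The base case $n=0$ is immediate, since $p^0_0 = X^0_0$ is the empty diagram; when $k=0$ or $l=0$ only one of $U,D$ is present, and the formula still holds because the missing term carries a vanishing quantum binomial (for instance $\qbinom{k-1}{k}=0$), so no separate boundary argument is needed. I expect the main obstacle to be the absorption step: one must check that inserting the interior idempotents $p_{n-1}$ genuinely leaves $p_n\tau p_n$ unchanged, which rests on the observation that the complementary orientation supplied by $\id_1 = \iota_1+\iota_{-1}$ cannot compose with a fixed oppositely-oriented strand. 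Once that is secured, the remainder is bookkeeping of signs and quantum binomials, packaged cleanly by Lemma \ref{lem:quantPT}.
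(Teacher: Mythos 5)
Your proposal is correct and follows essentially the same route as the paper's proof: induct on $n=k+l$, decompose $p^k_l = p_{k+l}(p^{k-1}_l\otimes\iota_1)p_{k+l} + p_{k+l}(p^k_{l-1}\otimes\iota_{-1})p_{k+l}$, apply Lemmas \ref{lem:xkm1l} and \ref{lem:xklm1} together with the inductive hypothesis, and finish with Lemma \ref{lem:quantPT}. The only differences are cosmetic: the paper asserts the decomposition without spelling out the absorption argument you give (which is a genuine strengthening of the exposition, since it justifies inserting the inner copies of $p_{n-1}$), and it starts the induction from $p^1_0=X^1_0$ and $p^0_1=X^0_1$ with $k,l>0$ in the inductive step, rather than handling the $k=0$ or $l=0$ cases via vanishing quantum binomials as you do.
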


\begin{proof}
Induct on $n=k+l$.  Notice $p^1_0=\iota_1=X^1_0$ and $p^0_1=\iota_{-1}=X^0_1$.  Assume $k>0$ and $l>0$. Then 

\begin{align*}
p^k_l&=p_{k+l}(p^{k-1}_l\otimes\iota_1)p_{k+l}+p_{k+l}(p^{k}_{l-1}\otimes\iota_{-1})p_{k+l}
\intertext{By Lemma \ref{lem:xkm1l} and Lemma \ref{lem:xklm1},}
&=(-1)^{kl}\qbinom{k+l-1}{l}X^k_l\otimes\beta_{-l}+(-1)^{kl}\qbinom{k+l-1}{k}X^k_l\otimes\beta_{k}
\intertext{By Lemma \ref{lem:quantPT},}
&=(-1)^{kl}\qbinom{k+l}{k}X^k_l
\end{align*}

\end{proof}

\begin{lem}\label{lem:isom}
$p^k_l \simeq \iota_{-l} \otimes \iota_{k+l} \otimes \iota_{-l}$.
\end{lem}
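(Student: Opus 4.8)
The plan is to exhibit the isomorphism directly in the Karoubi envelope by constructing a mutually inverse pair of intertwiners. Write $O = \iota_{-l}\otimes\iota_{k+l}\otimes\iota_{-l}$; since $O$ consists only of vertical strands it is the identity idempotent on its own object, so to prove $p^k_l\simeq O$ it suffices to produce morphisms $u\colon O\to p^k_l$ and $v\colon p^k_l\to O$ in the Karoubi envelope with $vu=\id_O$ and $uv=p^k_l$. By Lemma \ref{lem:pkl} we have $p^k_l=(-1)^{kl}\qbinom{k+l}{k}X^k_l$ with nonzero scalar, since $q$ is not a root of unity, so I would build $u$ and $v$ from natural unnormalized cup/cap maps and fix the scalar factors only at the end.

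First I would read off from Definition \ref{defn:x} the natural factorization of $X^k_l$: it presents the Jones--Wenzl idempotent $p_{k+l}$ with $l$ strands bent around on each side, so $X^k_l$ is already a composite of a cup-type map $a\colon O\to p^k_l$ that feeds the $k+l$ middle strands of $O$ through $p_{k+l}$ and bends the remaining $2l$ around, followed by the corresponding cap-type map $b$ back down to $O$. Taking $u$ and $v$ to be $a$ and $b$ up to explicit scalars, the composite $uv$ merely reassembles the bent strands and recovers $X^k_l$, so $uv=p^k_l$ after inserting the scalar of Lemma \ref{lem:pkl}. This direction is essentially formal.

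The real content is the other composite $vu$, which I expect to be the main obstacle. Here the two maps meet along $p_{k+l}$, and one must collapse this Jones--Wenzl idempotent --- sandwiched between the $k+l$ bare middle strands of $O$ and the $2l$ returning strands --- down to a scalar multiple of $\id_O$. I would do this by invoking the uncappability of $p_{k+l}$ to eliminate the turn-backs, the pop-switch consequences in Corollary \ref{cor:ia} and Corollary \ref{cor:ab} to pull the resulting bubbles through the vertical strands, and the bubble evaluation of Lemma \ref{lem:bubbs} to reduce each closed loop to a quantum number. The output is some scalar times $\id_O$, and the delicate point is that this scalar must be exactly the reciprocal of $(-1)^{kl}\qbinom{k+l}{k}$, so that a single choice of normalizing constants makes both $vu=\id_O$ and $uv=p^k_l$ hold at once. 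I expect this reciprocity to emerge from the same quantum-binomial bookkeeping as in Lemma \ref{lem:quantPT}, ultimately resting on Corollary \ref{cor:q}; carefully verifying that the two scalars agree, rather than the geometry of the maps, is where the work lies.
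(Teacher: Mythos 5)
Your skeleton is exactly the paper's: your $u$ and $v$ are its morphisms $f$ and $g$ (the two halves of $X^k_l$, with the normalizing scalar $(-1)^{kl}\qbinom{k+l}{k}$ from Lemma \ref{lem:pkl} placed on one of them), and the identity $uv = p^k_l$ is obtained just as you say. The gap is in your treatment of the other composite $vu$, in two respects. First, the mechanism you describe would not run as stated. In the composite the two copies of $p_{k+l}$ merge into a single Jones--Wenzl box, all of whose strands run (after bending) out to the outer boundary; no cap is attached to $p_{k+l}$, so uncappability has nothing to eliminate, and the bubbles $\beta_{\pm l}$ present in the diagram should be absorbed by scalar-free multi-pop-switch moves (Lemma \ref{lem:multips}) rather than evaluated by Lemma \ref{lem:bubbs}, which would replace each bubble by a two-term sum and flood the computation with quantum numbers. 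What actually works, and is what the paper does, is: perform two multi-pop-switch relations, then expand the remaining $p_{k+l}$ into Temperley--Lieb diagrams and observe that orientation compatibility annihilates every term except the one diagram with $\min(k,l)$ turn-backs at top and bottom and $|k-l|$ through strands; that surviving term closes up to a scalar multiple of $\id_O$. This yields $vu = c\,\id_O$ without ever identifying $c$.

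Second, the step you single out as ``where the work lies'' --- verifying that $c$ equals the reciprocal of $(-1)^{kl}\qbinom{k+l}{k}$ by quantum-binomial bookkeeping --- is unnecessary, and avoiding it is the point of the paper's argument. Once $uv = p^k_l$ (the identity on $p^k_l$ in the Karoubi envelope) and $vu = c\,\id_O$ are known, associativity gives $u = (uv)u = u(vu) = c\,u$, and $u \neq 0$ because $uv = p^k_l \neq 0$; hence $c = 1$. So the scalar agreement you propose to check by hand is automatic, and no analogue of Lemma \ref{lem:quantPT} or Corollary \ref{cor:q} enters this lemma at all. As written, your proposal defers its hardest step to an unexecuted computation that a one-line formal argument makes redundant.
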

\begin{proof}
The explicit isomorphisms are:
$$
f= (-1)^{kl}\qbinom{k+l}{k} \begin{tikzpicture}[mystyle]

\draw [] (-1.4,0.1) rectangle (1.4,-0.1);




\begin{scope}[decoration={markings, mark=at position 0.5 with {\arrow{>}}}]
 \draw[postaction={decorate}]
    (1, -.1) to[out=270, in=90] node[near start, auto]{$\scriptstyle{l}$} (0.6,-1.1);
 \draw[postaction={decorate}]
    (-0.6,-1.1) to[out=90, in=270] node[near end, auto]{$\scriptstyle{k}$} (-1, -.1);
\end{scope}

\draw[<-] (0,-0.5) +(400:0.2) arc(400:10:0.2) node[right]{$\scriptstyle{l}$};

\draw[->] (-1.4,-1.1)  arc(0:90:0.3) node[above]{$\scriptstyle{l}$};
\draw[] (-1.4,-1.1)  arc(0:180:0.3);

\end{tikzpicture},
\qquad
g=  \begin{tikzpicture}[mystyle]

\draw [] (-1.4,0.1) rectangle (1.4,-0.1);

\begin{scope}[decoration={markings, mark=at position 0.5 with {\arrow{>}}}]
  \draw[postaction={decorate}]
    (1, 0.1) to[out=90, in=-90] node[near start, auto, swap]{$\scriptstyle{k}$} (0.6,1.1);
  \draw[postaction={decorate}]
    (-0.6,1.1) to[out=-90, in=90] node[near end, auto, swap]{$\scriptstyle{l}$} (-1, 0.1);
\end{scope}

\draw[->] (0,0.5) +(400:0.2) arc(400:10:0.2) node[right]{$\scriptstyle{l}$};

\draw[->] (2,1.1)  arc(0:-90:0.3) node[below]{$\scriptstyle{l}$};
\draw[] (2,1.1)  arc(0:-180:0.3);




\end{tikzpicture}.
$$

Then
$f\circ g= (-1)^{kl} \qbinom{k+l}{k} X^k_l=p^k_l$ by Lemma \ref{lem:pkl}.  Thus $f\circ g$ is the identity morphism from $p^k_l$ to $p^k_l$.

On the other hand, $g\circ f=\iota_{-l} \otimes \iota_{k+l} \otimes \iota_{-l}$, the identity morphism from $\iota_{-l} \otimes \iota_{k+l} \otimes \iota_{-l}$ to $\iota_{-l} \otimes \iota_{k+l} \otimes \iota_{-l}$.

\begin{align*}
g\circ f&= (-1)^{kl}\qbinom{k+l}{k} \begin{tikzpicture}[mystyle]

\draw [] (-1.4,0.1) rectangle (1.4,-0.1);

\begin{scope}[decoration={markings, mark=at position 0.5 with {\arrow{>}}}]
  \draw[postaction={decorate}]
    (1, 0.1) to[out=90, in=-90] node[near start, auto, swap]{$\scriptstyle{k}$} (0.6,1.1);
  \draw[postaction={decorate}]
    (-0.6,1.1) to[out=-90, in=90] node[near end, auto, swap]{$\scriptstyle{l}$} (-1, 0.1);
\end{scope}

\draw[->] (0,0.5) +(400:0.2) arc(400:10:0.2) node[right]{$\scriptstyle{l}$};

\draw[->] (2,1.1)  arc(0:-90:0.3) node[below]{$\scriptstyle{l}$};
\draw[] (2,1.1)  arc(0:-180:0.3);

\begin{scope}[decoration={markings, mark=at position 0.5 with {\arrow{>}}}]
  \draw[postaction={decorate}]
    (1, -.1) to[out=270, in=90] node[near start, auto]{$\scriptstyle{l}$} (0.6,-1.1);
  \draw[postaction={decorate}]
    (-0.6,-1.1) to[out=90, in=270] node[near end, auto]{$\scriptstyle{k}$} (-1, -.1);
\end{scope}

\draw[<-] (0,-0.5) +(400:0.2) arc(400:10:0.2) node[right]{$\scriptstyle{l}$};

\draw[->] (-1.4,-1.1)  arc(0:90:0.3) node[above]{$\scriptstyle{l}$};
\draw[] (-1.4,-1.1)  arc(0:180:0.3);

\end{tikzpicture} \\
&= (-1)^{kl}\qbinom{k+l}{k} \begin{tikzpicture}[mystyle]

\draw [] (-0.7,0.1) rectangle (0.7,-0.1);

\begin{scope}[decoration={markings, mark=at position 0.5 with {\arrow{>}}}]
  \draw[postaction={decorate}]
    (0.2, 0.1) to[out=90, in=-90] node[near start, auto, swap]{$\scriptstyle{k}$} (0.0,1.1);
  \draw[postaction={decorate}]
    (-1,1.1) to[out=-90, in=90] node[near end, auto, swap]{$\scriptstyle{l}$} (-2,-1.1);
  \draw[postaction={decorate}]
    (-1.5,-1.1) to[out=90, in=90] node[near start, auto, swap]{$\scriptstyle{l}$} (-0.5, 0.1);

  \draw[postaction={decorate}]
    (0.0,-1.1) to[out=90, in=270] node[near end, auto]{$\scriptstyle{k}$} (-0.2, -.1);
  \draw[postaction={decorate}]
    (1,-1.1) to[out=90, in=-90] node[near end, auto, swap]{$\scriptstyle{l}$} (2,1.1);
  \draw[postaction={decorate}]
    (1.5,1.1) to[out=-90, in=-90] node[near start, auto, swap]{$\scriptstyle{l}$} (0.5, -0.1);

\end{scope}






\end{tikzpicture}\\
&= \iota_{-l} \otimes \iota_{k+l} \otimes \iota_{-l}\\
\end{align*}

The second equality holds by performing two multi-pop-switch relations:
one on the $\beta_{-l}$ at the top with the $l$ strands to the left and the $l$ strands on the bottom left,
and the other on the $\beta_l$ and the $l$ strands on the right and top right. 
Now expand the Jones-Wenzl idempotent.
The only non-zero term come from one of the following Temperley-Lieb diagrams.
$$\begin{tikzpicture}[mystyle]
\arc{0.7}{.4}{-0.5}{\scriptstyle{l}}{}
\arc{-0.7}{-.4}{0.5}{\scriptstyle{l}}{}
\draw[rounded corners=.25cm]
   (-0.5,-0.7) --(-0.5,-0.4) --(0.5,0.4)--(0.5,0.7) ;
\draw (0.9,.5) node  {$\scriptstyle{k-l}$};
\end{tikzpicture},
\qquad
\begin{tikzpicture}[mystyle]
\arc{0.7}{.4}{0.5}{\scriptstyle{k}}{}
\arc{-0.7}{-.4}{-0.5}{\scriptstyle{k}}{}
\draw[rounded corners=.25cm]
   (0.5,-0.7) --(0.5,-0.4) --(-0.5,0.4)--(-0.5,0.7) ;
\draw (-0.9,.5) node  {$\scriptstyle{l-k}$};
\end{tikzpicture}.
$$
Thus the result of $g\circ f$ must be a scalar times $\iota_{-l} \otimes \iota_{k+l} \otimes \iota_{-l}$.
Since $f \circ g$ is the identity
and $g \circ f$ is a scalar times the identity,
that scalar must be 1. 
\end{proof}

\section{Graph planar algebra and the Temperley-Lieb planar algebra}
\label{sec:gpa}

This section is motivation
for the definition of the pop-switch planar algebra.
We start with a summary of the definition of the graph planar algebra,
first defined in \cite{JonesGPA}.

Throughout this section,
fix a simple graph $\Gamma$.
For Jones, all planar algebras are shaded,
and $\Gamma$ is required to be bipartite.
We will ignore this issue.

Let $\mu$ be a function from the vertices of $\Gamma$ to $\mathbf{R}_{>0}$.
We will define the graph planar algebra $\mathcal{P}$
corresponding to $(\Gamma, \mu)$.

For each $k > 0$,
let $\mathcal{P}_{2k}$ be the vector space of complex valued functions
on the set of loops of length $2k$ on $\Gamma$.

Suppose $T$ is a tangle.
For each input disk of $T$,
let $v_b$ be a corresponding input vector.
We must define a corresponding output vector $v$.
Thus we must define
$v(\gamma)$ for every loop $\gamma$ in $\Gamma$
that has length equal to the number of endpoints on the outer boundary of $T$.

A {\em state} $\sigma$ of $T$ is a function
from the set of regions of $T$ to the set of vertices of $\Gamma$
such that adjacent regions are sent to adjacent vertices.

Suppose $r$ is a region of $T$.
This is a planar surface with boundary
that may include some right-angled corners.
The {\em Euler measure} $e(r)$
is defined in a similar way to the Euler characteristic,
using the usual formula $V - E + F$ for a triangulation of $r$.
The difference is,
every corner must be a vertex and only counts as $\frac{1}{4}$,
any other vertex on a boundary only counts as $\frac{1}{2}$,
and every edge on a boundary only counts as $\frac{1}{2}$.

We are finally ready to define the image vector $v$
of the vectors $v_b$
under the action of the tangle $T$.
$$
v(\gamma) =
\sum_\sigma
\left( \prod_r \mu(\sigma(r))^{e(r)} \right)
\left( \prod_b v_b(\sigma|_{\partial b}) \right).
$$
The sum is over all states $\sigma$ that are compatible with $\gamma$.
The first product is over all regions $r$ of $T$.
The second product is over all input disks $b$ of $T$.

The Temperley-Lieb planar algebra
is a subfactor planar algebra of type $A_\infty$.
It can be found inside
the graph planar algebra associated to $\Gamma = A_\infty$,
which is the ray with vertices indexed by positive integers.
The function $\mu$
assigns the quantum integer $[n]$ to the $n$th vertex.
(Note we are still assuming $q$ is not a root of unity.
If $q$ is a primitive $(n+1)$th root of unity
then we should use the graph $A_n$.)

Suppose $T$ is an oriented tangle.
Define a state of $T$
to be a function from the set of regions of $T$
to the set of vertices of $A_\infty$
such that,
for any strand of $T$,
if the region to its right is sent to vertex $n$
then the region to its left is sent to vertex $n+1$.
Thus,
a state is determined by the vertex associated to a single region.
In a sense,
the orientation on the strands
removes the ambiguity in the state of a Temperley-Lieb diagram.

Now suppose $T$ and $T'$ differ by a pop-switch relation.
There is an obvious correspondence between
states of $T$ and states off $T'$.
Furthermore,
the total Euler measure of the region associated to any given vertex is the same.
We therefore have
a well-defined embedding
of the pop-switch planar algebra
in the graph planar algebra of the graph $A_\infty$.

One can think of the pop-switch planar algebra
as a diagrammatic way to keep track of
computations inside the graph planar algebra of $A_\infty$.

\bibliographystyle{alpha}
\bibliography{jwProj}

\end{document}